\theoremstyle{plain}
\newtheorem{thm}{Theorem}[section]
\numberwithin{equation}{section}
\numberwithin{figure}{section}  %Comment out for sequentially-numbered
\theoremstyle{plain}
\theoremstyle{plain}
\theoremstyle{plain}
\newtheorem{cor}[thm]{Corollary} %Delete [teo] to re-start numbering
\theoremstyle{plain}
\theoremstyle{plain}
\newtheorem{lem}[thm]{Lemma} %Delete [teo] to re-start numbering
\theoremstyle{plain}
\begin{document}
\title{The Sequence Space $bv$ and Some Applications}
\author{Murat Kiri\c{s}ci}
\address{Department of Mathematical Education, Hasan Ali Y\"{u}cel Education Faculty, Istanbul University, Vefa, 34470, Fatih, Istanbul, Turkey}
\email{mkirisci@hotmail.com, murat.kirisci@istanbul.edu.tr}
\vspace{0.5cm}
\subjclass[2010]{Primary 46A45; Secondary 46B45, 46A35.}
\keywords{Matrix transformations, sequence space, $BK$-space, $\alpha-, \beta-, \gamma-$duals,  Schauder basis}
\vspace{0.5cm}

\begin{abstract}
In this work, we give well-known results related to some properties, dual spaces and matrix transformations of the sequence space $bv$ and introduce the matrix domain of space $bv$ with arbitrary triangle matrix $A$. Afterward, we choose the matrix $A$ as Ces\`{a}ro mean of order one, generalized weighted mean and Riesz mean and compute $\alpha-, \beta-, \gamma-$duals of these spaces. And also, we characterize the matrix classes of the new spaces.
\end{abstract}

\maketitle

\section{Introduction}
The set of all sequences denotes with $\omega := \mathbb{C}^{\mathbb{N}}:=\{x=(x_{k}): x: \mathbb{N}\rightarrow \mathbb{C}, k\rightarrow x_{k}:=x(k)\}$, where $\mathbb{C}$ denotes the complex field and $\mathbb{N}=\{0,1,2,\ldots\}$. Each linear subspace of $\omega$ (with the induced addition and scalar multiplication) is called a \emph{sequence space}. We will write $\phi, \ell_{\infty}, c ~\textrm{ and }~ c_{0}$ for the sets of all finite, bounded, convergent and null sequences, respectively. It obviously that these sets are subsets of $\omega$.\\

A sequence, whose $k-th$ term is $x_{k}$, is denoted by $x$ or $(x_{k})$. By $e$ and $e^{(n)}, (n=0,1,2,...)$, we denote the sequences such that $e_{k}=1$ for $k=0,1,2,...$, and $e_{n}^{(n)}=1$ and $e_{k}^{(n)}=0$ for $k\neq n$.\\

\emph{A coordinate space} (or \emph{$K-$space}) is a vector space of numerical sequences, where addition and scalar multiplication are defined pointwise. That is, a sequence space $X$ with a linear topology is called a $K$-space provided each of the maps $p_{i}:X\rightarrow \mathbb{C}$ defined by $p_{i}(x)=x_{i}$ is continuous for all $i\in \mathbb{N}$. A $BK-$space is a $K-$space, which is also a Banach space with continuous coordinate functionals $f_{k}(x)=x_{k}$, $(k=1,2,...)$. A $K-$space $K$ is called an \emph{$FK-$space} provided $X$ is a complete linear metric space. An \emph{$FK-$space} whose topology is normable is called a \emph{$BK-$ space}. A sequence $(b_{n}), (n=0,1,2,\ldots)$ in a linear metric $X$ is called a \emph{Schauder basis} if for each $x\in X$ there exists a unique sequence $(\alpha_{n}), (n=0,1,2,\ldots)$ of scalars such that $x=\sum_{n=0}^{\infty}\alpha_{n}b_{n}$. An \emph{$FK-$space} $X$ is said to have $AK$ property, if $\phi \subset X$ and $\{e^{(n)}\}$ is a basis for $X$ and $\phi=span\{e^{(n)}\}$, the set of all finitely non-zero sequences.\\

The series $\sum\alpha_{k}b_{k}$ which has the sum $x$ is then called the expansion of $x$ with respect to $(b_{n})$, and written as $x=\sum\alpha_{k}b_{k}$. An \emph{$FK-$space} $X$ is said to have $AK$ property, if $\phi \subset X$ and $\{e^{k}\}$ is a basis for $X$, where $e^{k}$ is a sequence whose only non-zero term is a $1$ in $k^{th}$ place for each $k\in \mathbb{N}$ and $\phi=span\{e^{k}\}$, the set of all finitely non-zero sequences.\\

Let $X$ is a sequence space and $A$ is an infinite matrix. The sequence space
\begin{eqnarray}\label{eq0}
X_{A}=\{x=(x_{k})\in\omega:Ax\in X\}
\end{eqnarray}
is called the matrix domain of $X$ which is a sequence space(for several examples of matrix domains, see \cite{Basarkitap} p. 49-176).\\

We write $\mathcal{U}$ for the set of all sequences $u=(u_{k})$ such that $u_{k}\neq 0$ for all $k\in \mathbb{N}$. For $u\in \mathcal{U}$, let $1/u=(1/u_{k})$. Let $u,v\in \mathcal{U}$, $(t_{k})$ be a sequence of positive and write $T_{n}=\sum_{k=0}^{n}t_{k}$. Now, we define the difference matrix $\Delta=(\delta_{nk})$, the matrix $C=(c_{nk})$ of the Cesaro mean of order one, the \emph{generalized weighted mean} or \emph{factorable matrix} $G(u,v)=(g_{nk})$ and the matrix $R^{t}=\left(r_{nk}^{t}\right)$ of the Riesz mean by
\begin{eqnarray}\label{deltamtrx}
\delta_{nk}= \left\{ \begin{array}{ccl}
(-1)^{n-k}&, & \quad (n-1\leq k \leq n)\\
0&, & \quad (0\leq k < n-1 ~\textrm{ or }~ k>n)
\end{array}\right.
\end{eqnarray}

\begin{eqnarray*}
c_{nk}=\left\{\begin{array}{ccl}
\frac{1}{n+1}&, & (0\leq k\leq n)\\
0&, & (k>n)
\end{array}\right.
\end{eqnarray*}

\begin{eqnarray*}
g_{nk}=\left\{\begin{array}{ccl}
u_{n}v_{k}&, & (0\leq k\leq n)\\
0&, & (k>n)
\end{array}\right.
\end{eqnarray*}

\begin{eqnarray*}
r_{nk}^{t}=\left\{\begin{array}{ccl}
t_{k}/T_{n}&, & (0\leq k\leq n)\\
0&, & (k>n)
\end{array}\right.
\end{eqnarray*}
for all $k,n\in\mathbb{N}$; where $u_{n}$ depends only on $n$ and $v_{k}$ only on $k$.\\

In this work, we give well-known results related to some properties, dual spaces and matrix transformations of the sequence space $bv$ and introduce the matrix domain of space $bv$ with arbitrary triangle matrix $A$. Afterward, we choose the matrix $A$ as Ces\`{a}ro mean of order one, generalized weighted mean and Riesz mean and compute $\alpha-,\beta-\beta\gamma-$duals of these spaces. And also, we characterize the matrix classes of the spaces $bv(C)$, $bv(G)$, $bv(R)$.

\section{Well-Known Results}

In this section, we will give some well-known results and will define a new form of the sequence space $bv$ with arbitrary triangle $A$.

The space of all sequences of bounded variation defined by
\begin{eqnarray*}
bv=\left\{x=(x_{k})\in \omega: \sum_{k=1}^{\infty}|x_{k}-x_{k-1}|<\infty\right\},
\end{eqnarray*}
which is a $BK-$space under the norm $\|x\|_{bv}=|x_{0}|+\sum_{k=1}^{\infty}|x_{k}-x_{k-1}|$
for $x\in bv$. The space $bv_{0}$ denotes $bv_{0}=bv\cap c_{0}$. It is clear that $bv=bv_{0}+\{e\}$.
Also the inclusions $\ell_{1}\subset bv_{0} \subset bv=bv_{0}+\{e\}\subset c$ are strict.\\

Let $X$ be a sequence space and $\Delta$ denotes the matrix as defined by (\ref{deltamtrx}).
The matrix domain $X_{\Delta}$ for $X=\{\ell_{\infty}, c, c_{0}\}$ is called the \emph{difference sequence spaces},
which was firstly defined and studied by Kizmaz \cite{Kizmaz}. If we choose $X=\ell_{1}$, the space $\ell_{1}({\Delta})$
is called \emph{the space of all sequences of bounded variation} and denote by $bv$. In \cite{AB2}, Ba\c{s}ar and Altay
have defined the sequence space $bv_{p}$ for $1\leq p \leq \infty$ which consists of all sequences such that
$\Delta$-transforms of them are in $\ell_{p}$ and have studied several properties of these spaces. Basar and Altay\cite{AB2}
proved that the space $bv_{p}$ is a $BK-$ space with the norm $\|x\|_{bv_{p}}=\|\Delta x\|_{\ell_{p}}$
and linearly isomorphic to $\ell_{p}$ for $1\leq p \leq \infty$. The inclusion relations for the space $bv_{p}$
are given in \cite{AB2} as below:
\begin{itemize}
\item[(i)] The inclusion $\ell_{p}\subset bv_{p}$ strictly holds for $1\leq p \leq \infty$.
\item[(ii)] Neither of the spaces $bv_{p}$ and $\ell_{\infty}$ includes the other one, where $1<p<\infty$.
\item[(iii)] If $1\leq p < s$, then $bv_{p}\subset bv_{s}$.
\end{itemize}

Define a sequence $b^{(k)}=\{b_{n}^{(k)}\}_{n\in \mathbb{N}}$ of elements of the space $bv_{p}$ for every fixed $k\in \mathbb{N}$
by $0$ for $n<k$ and $1$ for $n\geq k$. Then the sequence $\{b_{n}^{(k)}\}_{n\in \mathbb{N}}$ is a Schauder basis for the space $bv_{p}$
and every sequence $x\in bv_{p}$ has a unique representation $x=\sum_{k}(\Delta x)_{k}b^{(k)}$ for all $k\in \mathbb{N}$. The space $bv_{\infty}$
has no Schauder basis\cite{AB2}.

Let $X$ and $Y$ be two sequence spaces, and $A=(a_{nk})$ be an infinite matrix of complex numbers $a_{nk}$,
where $k,n\in\mathbb{N}$. Then, we say that $A$ defines a \emph{matrix mapping} from $X$ into $Y$, and we denote
it by writing $A :X \rightarrow Y$ if for every sequence $x=(x_{k})\in X$. The sequence $Ax=\{(Ax)_{n}\}$,
the $A$-transform of $x$, is in $Y$; where
\begin{eqnarray}\label{triA}
(Ax)_{n}=\sum_{k}a_{nk}x_{k}~\textrm{ for each }~n\in\mathbb{N}.
\end{eqnarray}
For simplicity in notation, here and in what follows, the summation without limits runs from $0$ to $\infty$.
By $(X:Y)$, we denote the class of all matrices $A$ such that $A:X \rightarrow Y $. Thus, $A\in(X:Y)$ if and
only if the series on the right side of (\ref{triA}) converges for each $n\in\mathbb{N}$ and each $x\in X$ and
we have $Ax =\{(Ax)_{n}\}_{n \in\mathbb{N}}\in Y$ for all $x\in X$. A sequence $x$ is said to be \emph{$A$-summable
to $l$} if $Ax$ converges to $l$ which is called the \emph{$A$-limit} of $x$.\\

If $X,Y\subset \omega$ and $z$ any sequence, we can write $z^{-1}*X=\{x=(x_{k})\in \omega: xz\in X\}$ and $M(X,Y)=\bigcap_{x\in X}x^{-1}*Y$.
If we choose $Y=\ell_{1},cs, bs$, then we obtain the $\alpha-, \beta-, \gamma- $duals of $X$, respectively as

\begin{eqnarray*}
X^{\alpha}&=&M(X,\ell_{1})=\{a=(a_{k})\in \omega:  ax=(a_{k}x_{k})\in \ell_{1} ~\textrm{for all }~   x\in X\}\\
X^{\beta}&=&M(X,cs)=\{a=(a_{k})\in \omega:  ax=(a_{k}x_{k})\in cs ~\textrm{for all }~   x\in X\}\\
X^{\gamma}&=&M(X,bs)=\{a=(a_{k})\in \omega:  ax=(a_{k}x_{k})\in bs ~\textrm{for all }~   x\in X\}.
\end{eqnarray*}
The $\alpha-, \beta-, \gamma- $duals of classical sequence spaces are defined by
\begin{eqnarray*}
c_{0}^{\alpha}=c^{\alpha}=\ell_{\infty}^{\alpha}=\ell_{1}, \quad \quad \ell_{1}^{\alpha}=\ell_{\infty}, \quad \quad cs^{\alpha}=bs^{\alpha}=bv^{\alpha}=bv_{0}^{\alpha}=\ell_{1}\\
c_{0}^{\beta}=c^{\beta}=\ell_{\infty}^{\beta}=\ell_{1}, \quad \ell_{1}^{\beta}=\ell_{\infty}, \quad cs^{\beta}=bv, \quad bs^{\beta}=bv_{0}, \quad bv^{\beta}=cs, \quad bv_{0}^{\beta}=bs\\
c_{0}^{\gamma}=c^{\gamma}=\ell_{\infty}^{\gamma}=\ell_{1}, \quad \quad \ell_{1}^{\gamma}=\ell_{\infty},  \quad \quad cs^{\gamma}=bs^{\gamma}=bv, \quad bv^{\gamma}=bv_{0}^{\gamma}=bs.
\end{eqnarray*}

The \emph{continuous dual} of $X$ is the space of all continuous linear functionals on $X$ and is denote by $X^{*}$.
The continuous dual $X^{*}$ with the norm $\|.\|^{*}$ defined by $\|f\|^{*}=\sup\left\{|f(x)|: \|x\|=1\right\}$ for $f\in X^{*}$.\\

The spaces $c^{*}$ and $c_{0}^{*}$ are norm isomorphic with $\ell_{1}$. Also, $\|a\|_{\beta}=\|a\|_{\ell_{1}}$ for all $a\in \ell_{\infty}^{\beta}$.\\

The following theorem is the most important result in the theory of matrix transformations:

\begin{thm}
Matrix transformations between $BK-$spaces are continuous.
\end{thm}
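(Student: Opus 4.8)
The plan is to invoke the Closed Graph Theorem, which applies because a $BK$-space is by definition a Banach space. Write $L_A \colon X \to Y$ for the linear map $x \mapsto Ax$ induced by a matrix $A \in (X:Y)$, where $X$ and $Y$ are $BK$-spaces. Since both are complete normed spaces and $L_A$ is clearly linear, it suffices to show that the graph of $L_A$ is closed; continuity then follows from the Closed Graph Theorem.

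To verify closedness, I would take a sequence $(x^{(j)})$ with $x^{(j)} \to x$ in $X$ and $Ax^{(j)} \to z$ in $Y$, and aim to prove $z = Ax$. The strategy is to compare the two limits coordinatewise. Fix $m \in \mathbb{N}$. Because $Y$ is a $BK$-space, the coordinate functional $p_m \colon Y \to \mathbb{C}$, $y \mapsto y_m$, is continuous; applying it to $Ax^{(j)} \to z$ gives $(Ax^{(j)})_m \to z_m$. It therefore remains to show that also $(Ax^{(j)})_m \to (Ax)_m$, for then $z_m = (Ax)_m$ holds for every $m$ and hence $z = Ax$.

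The key step --- and the one I expect to be the main obstacle --- is to establish that for each fixed $m$ the row functional $L_m \colon X \to \mathbb{C}$ defined by $L_m(x) = (Ax)_m = \sum_k a_{mk} x_k$ is continuous on $X$. Here I would use that $A \in (X:Y)$ forces the series $\sum_k a_{mk} x_k$ to converge for every $x \in X$, i.e. the $m$-th row $(a_{mk})_k$ lies in $X^{\beta}$. The partial sums $s_N^{(m)}(x) = \sum_{k=0}^{N} a_{mk} x_k$ are finite linear combinations of the coordinate functionals $f_k(x) = x_k$, which are continuous since $X$ is a $BK$-space; thus each $s_N^{(m)}$ is a continuous linear functional on the Banach space $X$, and $s_N^{(m)}(x) \to L_m(x)$ for every $x$. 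By the Banach--Steinhaus theorem, the pointwise limit of a sequence of continuous linear functionals on a Banach space is again continuous, so $L_m$ is continuous.

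With $L_m$ continuous, the convergence $x^{(j)} \to x$ in $X$ yields $L_m(x^{(j)}) \to L_m(x)$, that is $(Ax^{(j)})_m \to (Ax)_m$. Combined with $(Ax^{(j)})_m \to z_m$ from the second paragraph, this gives $z_m = (Ax)_m$ for all $m \in \mathbb{N}$, whence $z = Ax$. Thus the graph of $L_A$ is closed, and the Closed Graph Theorem delivers the continuity of $L_A$, completing the proof.
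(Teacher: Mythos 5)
Your proof is correct and is the standard argument for this classical fact: the paper itself states the theorem without proof (citing it only as the fundamental result of the subject, cf.\ Theorem 4.2.8 in Wilansky's book), and the argument it implicitly relies on is exactly yours --- continuity of each row functional $x\mapsto\sum_k a_{mk}x_k$ via Banach--Steinhaus applied to the partial-sum functionals, followed by the Closed Graph Theorem. Nothing is missing; the only remark worth adding is that the same proof works verbatim for $FK$-spaces, since the Closed Graph Theorem and Banach--Steinhaus both hold for complete linear metric spaces.
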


In \cite{AB}, Ba\c{s}ar and Altay developed very useful tools for duals and matrix transformations of sequence spaces as below:

\begin{thm}\cite[Lemma 5.3]{AB2}
Let $X, Y$ be any two sequence spaces, $A$ be an infinite matrix and $U$ a triangle matrix matrix.Then, $A\in (X: Y_{U})$ if and only if $UA\in (X:Y)$.
\end{thm}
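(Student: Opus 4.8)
The plan is to reduce everything to the elementary associativity identity $(UA)x=U(Ax)$, which holds coordinatewise for every sequence $x$ for which $Ax$ is defined. The decisive structural fact is that $U$ is a triangle, so each of its rows $(u_{nk})_{k}$ has only finitely many nonzero entries (namely those with $0\leq k\leq n$). Consequently the $n$-th coordinate $\sum_{k}u_{nk}(Ax)_{k}$ is a \emph{finite} sum, and interchanging it with the series defining $(Ax)_{k}=\sum_{j}a_{kj}x_{j}$ is legitimate with no convergence hypothesis beyond the existence of the individual series $(Ax)_{k}$. This yields $\big(U(Ax)\big)_{n}=\sum_{j}\big(\sum_{k}u_{nk}a_{kj}\big)x_{j}=\big((UA)x\big)_{n}$, the identity I will use in both directions.

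For the forward implication I would assume $A\in(X:Y_{U})$ and fix $x\in X$. Then $Ax$ exists and $Ax\in Y_{U}$, i.e. $U(Ax)\in Y$. By the identity above $(UA)x$ is defined and equals $U(Ax)\in Y$, so $UA\in(X:Y)$. This direction is immediate once the interchange is justified.

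The converse is where the only genuine obstacle lies. Assuming $UA\in(X:Y)$ and fixing $x\in X$, I obtain $(UA)x\in Y$, but this does not \emph{a priori} guarantee that the series defining $Ax$ converge. To supply this I would use that a triangle is invertible and its two-sided inverse $V=U^{-1}$ is again a triangle, so $VU=UV=I$ and $V$ too has finite rows. Since $V\big((UA)x\big)$ is a finite sum in each coordinate it is defined, and repeating the interchange together with $\sum_{m}v_{km}u_{ml}=\delta_{kl}$ shows that its $k$-th coordinate equals $\sum_{j}a_{kj}x_{j}$. Hence $Ax$ exists and $Ax=V\big((UA)x\big)$; applying $U$ and using $UV=I$ gives $U(Ax)=(UA)x\in Y$, i.e. $Ax\in Y_{U}$. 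Therefore $A\in(X:Y_{U})$, completing the equivalence.

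I expect the main difficulty to be precisely the existence of $Ax$ in the backward direction; the inverse-triangle argument is the device that removes it, while everything else is the routine interchange of a finite sum with an infinite one afforded by the triangular structure of $U$.
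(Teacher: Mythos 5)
Your proof is correct. The paper itself offers no proof of this statement --- it is quoted verbatim from \cite[Lemma 5.3]{AB2} --- but your argument is exactly the standard one from that source: the identity $(UA)x=U(Ax)$ justified by the row-finiteness of the triangle $U$, with the only real issue (the existence of $Ax$ in the converse direction) settled by applying the inverse triangle $V=U^{-1}$ and using $\sum_{m}v_{km}u_{ml}=\delta_{kl}$ to recover $Ax=V\big((UA)x\big)$. Nothing is missing.
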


\begin{thm}\cite[Theorem 3.1]{AB}\label{mtrxtool}
$B_{\mu}^{U}=(b_{nk})$ be defined via a sequence $a=(a_{k})\in\omega$ and inverse of the triangle matrix $U=(u_{nk})$ by
\begin{eqnarray*}
b_{nk}=\sum_{j=k}^na_{j}v_{jk}
\end{eqnarray*}
for all $k,n\in\mathbb{N}$. Then,
\begin{eqnarray*}
\lambda_{U}^{\beta}=\{a=(a_{k})\in\omega: B^{U}\in(\lambda:c)\}
\end{eqnarray*}
and
\begin{eqnarray*}
\lambda_{U}^{\gamma}=\{a=(a_{k})\in\omega: B^{U}\in(\lambda:\ell_{\infty})\}.
\end{eqnarray*}
\end{thm}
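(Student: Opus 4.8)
The plan is to reduce the dual computation to a membership question about the matrix $B^U$ by exploiting the fact that the triangle $U$ sets up a bijection between $\lambda_U$ and $\lambda$. First I would recall that, since $U=(u_{nk})$ is a triangle, it is invertible and its inverse $V=(v_{nk})$ is again a triangle (lower triangular with nonzero diagonal). Thus for any $x\in\lambda_U$ the sequence $y:=Ux$ lies in $\lambda$, and conversely $x=Vy$; moreover $U:\lambda_U\to\lambda$ is a bijection with inverse $V$, since for arbitrary $y\in\lambda$ the vector $x=Vy$ satisfies $Ux=UVy=y\in\lambda$, whence $x\in\lambda_U$. In coordinates this reads $x_k=\sum_{j=0}^k v_{kj}y_j$ for every $k$.

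The heart of the argument is a change in the order of a finite summation. Fix $a=(a_k)\in\omega$ and $x\in\lambda_U$, write $y=Ux$, and consider the $n$-th partial sum of $\sum_k a_k x_k$. Substituting $x_k=\sum_{j=0}^k v_{kj}y_j$ and interchanging the two finite sums over the triangular index set $0\le j\le k\le n$ gives
\begin{eqnarray*}
\sum_{k=0}^n a_k x_k=\sum_{k=0}^n a_k\sum_{j=0}^k v_{kj}y_j=\sum_{j=0}^n\Big(\sum_{k=j}^n a_k v_{kj}\Big)y_j=\sum_{j=0}^n b_{nj}y_j=(B^U y)_n,
\end{eqnarray*}
where the inner coefficient is recognized as $b_{nj}=\sum_{k=j}^n a_k v_{kj}$, exactly the entry defining $B^U$. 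Hence the sequence of partial sums of $\sum_k a_k x_k$ coincides, term by term, with the $B^U$-transform of $y=Ux$.

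With this identity in hand the two claims follow directly from the definitions of the $\beta$- and $\gamma$-duals. Indeed, $a\in\lambda_U^\beta$ means $ax\in cs$ for every $x\in\lambda_U$, i.e. the partial sums $\big(\sum_{k=0}^n a_k x_k\big)_n$ converge; by the displayed identity and the bijection $x\leftrightarrow y$, this is equivalent to $(B^U y)_n$ converging for every $y\in\lambda$, that is $B^U\in(\lambda:c)$. Replacing \emph{convergent} by \emph{bounded} throughout, $a\in\lambda_U^\gamma$ means $ax\in bs$ for every $x\in\lambda_U$, which by the same identity is equivalent to $(B^U y)_n$ being bounded for every $y\in\lambda$, i.e. $B^U\in(\lambda:\ell_\infty)$.

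I expect no serious obstacle here; the computation is routine. The only points that require care are verifying that $x\mapsto Ux$ really is a bijection of $\lambda_U$ onto $\lambda$ (so that quantifying over $x\in\lambda_U$ is the same as quantifying over $y\in\lambda$), and keeping the index bookkeeping straight when reindexing the finite double sum, so that the resulting coefficient matches the prescribed $b_{nk}$. Since every interchange of summation above is over a finite triangular range, no question of absolute convergence arises at that stage.
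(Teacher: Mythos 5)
Your argument is correct and is exactly the standard proof of this result: the paper itself states the theorem without proof (quoting it as Theorem 3.1 of Altay--Ba\c{s}ar \cite{AB}), and your reconstruction --- writing $x=Vy$ with $V=U^{-1}$, interchanging the finite triangular double sum to get $\sum_{k=0}^{n}a_{k}x_{k}=(B^{U}y)_{n}$, and then reading off the $\beta$- and $\gamma$-dual conditions as $B^{U}\in(\lambda:c)$ and $B^{U}\in(\lambda:\ell_{\infty})$ --- is precisely the argument used in that reference. The two points you flag (bijectivity of $U:\lambda_{U}\to\lambda$ and the index bookkeeping) are indeed the only places where care is needed, and you have handled both.
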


The following theorem proved by Zeller \cite{Zeller}:
\begin{thm}
Let $X$ be an $FK-$space whose topology is given by means of the seminorms $\{q_{n}\}_{n=1}^{\infty}$
and let $A$ be an infinite matrix. Then $X_{A}$ is an $FK-$space when topologized by
\begin{eqnarray*}
&x& \rightarrow |x_{j}| \quad\quad (j=1,2,\ldots)\\
&x& \rightarrow \sup_{n}|\sum_{j=1}^{n}a_{ij}x_{j}| \quad\quad (i=1,2,\ldots)\\
&x& \rightarrow q_{n}(Ax) \quad\quad (n=1,2,\ldots).
\end{eqnarray*}
\end{thm}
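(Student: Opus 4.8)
The plan is to verify the three defining features of an $FK$-space in turn: that the listed seminorms generate a metrizable locally convex topology under which all coordinate maps are continuous, and that $X_A$ is complete in this topology. The first point is essentially bookkeeping, while completeness carries the real content of the theorem.

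First I would observe that the three families are each indexed by $\mathbb{N}$, so together they form a countable family of seminorms; since the subfamily $x\mapsto|x_j|$ already separates the points of $\omega$, the resulting locally convex topology is Hausdorff, hence metrizable by the standard bounded-metric construction applied to an enumeration of all the seminorms. The continuity of every coordinate functional $x\mapsto x_j$ is immediate from this same subfamily, so $X_A$ is at once a $K$-space, and it remains only to prove completeness.

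For completeness I would take a Cauchy sequence $(x^{(m)})$ in $X_A$. The seminorms $x\mapsto|x_j|$ force each coordinate sequence $(x_j^{(m)})_m$ to be Cauchy in $\mathbb{C}$, so $x_j^{(m)}\to x_j$ for a limit $x=(x_j)\in\omega$; the candidate limit is thus forced coordinatewise. The seminorms $q_n(Ax)$ make $(Ax^{(m)})_m$ Cauchy in $X$, and completeness of the $FK$-space $X$ yields $Ax^{(m)}\to y$ in $X$ for some $y\in X$; because $X$ is a $K$-space this gives $(Ax^{(m)})_i\to y_i$ for every $i$. The object of the argument is then to show $y=Ax$, which simultaneously places $x$ in $X_A$ and identifies the limit.

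The crux — and the step I expect to cause the most trouble — is passing from the convergence of the rows $(Ax^{(m)})_i$ and of the coordinates $x_j^{(m)}$ to the convergence of each series $\sum_j a_{ij}x_j$ together with the identity $\sum_j a_{ij}x_j=y_i$. Here the second family of seminorms is decisive: the Cauchy condition on $x\mapsto\sup_n\bigl|\sum_{j=1}^n a_{ij}x_j\bigr|$ says that the partial sums $s_{in}^{(m)}=\sum_{j=1}^n a_{ij}x_j^{(m)}$ are Cauchy in $m$ uniformly in $n$. Letting $m\to\infty$ in each finite partial sum gives $s_{in}^{(m)}\to s_{in}:=\sum_{j=1}^n a_{ij}x_j$, and the uniform Cauchy estimate upgrades this to convergence that is uniform in $n$. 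Uniform convergence in $n$ legitimizes the interchange of iterated limits
\[
\lim_{n}\sum_{j=1}^n a_{ij}x_j=\lim_n\lim_m s_{in}^{(m)}=\lim_m\lim_n s_{in}^{(m)}=\lim_m (Ax^{(m)})_i=y_i,
\]
so the row series converges to $y_i$ for every $i$; that is, $Ax=y\in X$ and hence $x\in X_A$. Finally I would read off the convergence $x^{(m)}\to x$ in each of the three seminorm families — the first by the construction of $x$, the second from the uniform convergence just established, and the third from $Ax^{(m)}\to y=Ax$ in $X$ — which shows that $X_A$ is complete and therefore an $FK$-space in the stated topology.
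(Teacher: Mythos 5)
Your proof is correct. The paper itself gives no proof of this statement---it is quoted from Zeller \cite{Zeller} (see also Wilansky \cite{Wil84})---and your argument is precisely the classical one: metrizability and the $K$-property come for free from the countable separating family of seminorms, and the real content, namely that the coordinatewise limit $x$ satisfies $Ax=y$, is correctly extracted from the second family of seminorms via the uniform-in-$n$ Cauchy estimate on the partial sums $s_{in}^{(m)}$ and the Moore--Osgood interchange of limits (using that $\lim_n s_{in}^{(m)}$ exists for each $m$ because $x^{(m)}\in X_A$). The final verification that $x^{(m)}\to x$ in each of the three seminorm families is also handled correctly, so nothing is missing.
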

Following theorems are given by Bennet \cite{Bennet}.
\begin{thm}
An $FK-$space $X$ contains $\ell_{1}$ if and only if $\{e^{(j)}: j=1,2,\ldots\}$ is a bounded
subsets of $X$.
\end{thm}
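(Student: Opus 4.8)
The plan is to prove the two implications separately, treating the forward direction as essentially formal and concentrating effort on the converse.

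For the forward implication, suppose $\ell_{1}\subseteq X$. Both $\ell_{1}$ and $X$ are $FK$-spaces, and on each the coordinate functionals are continuous; hence the graph of the inclusion map $\iota:\ell_{1}\hookrightarrow X$ is closed, and by the closed graph theorem (valid for complete metric linear spaces) $\iota$ is continuous. Since $\|e^{(j)}\|_{\ell_{1}}=1$ for every $j$, the set $\{e^{(j)}\}$ is bounded in $\ell_{1}$, and a continuous linear map carries bounded sets to bounded sets; therefore $\{e^{(j)}\}=\{\iota(e^{(j)})\}$ is bounded in $X$.

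For the converse, assume $\{e^{(j)}\}$ is bounded in $X$ and fix $x=(x_{k})\in\ell_{1}$. Writing $P_{n}x=\sum_{k=0}^{n}x_{k}e^{(k)}$, each $P_{n}x$ lies in $X$ because $e^{(k)}\in X$ and $X$ is a vector space; I want to show $(P_{n}x)$ converges in $X$. Since $X$ is complete, it suffices to show this sequence is Cauchy, so I would estimate the tail blocks $P_{m}x-P_{n}x=\sum_{k=n+1}^{m}x_{k}e^{(k)}$. Setting $\sigma_{n}=\sum_{k>n}|x_{k}|\to 0$, I would factor each block as $\sigma_{n}\,w$, where $w=\sum_{k=n+1}^{m}(x_{k}/\sigma_{n})e^{(k)}$ has coefficients of absolute-value sum at most $1$ and so lies in the absolutely convex hull of $\{e^{(j)}\}$. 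Granting that this hull is again bounded, multiplication by the scalars $\sigma_{n}\to 0$ forces the blocks to tend to $0$ in $X$, giving the Cauchy property. Thus $P_{n}x\to y$ for some $y\in X$; applying the continuous coordinate functional $p_{i}$ yields $p_{i}(y)=\lim_{n}\sum_{k\le n}x_{k}\,p_{i}(e^{(k)})=x_{i}$, so $y=x$ as a sequence and $x\in X$. As $x\in\ell_{1}$ was arbitrary, $\ell_{1}\subseteq X$.

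The main obstacle is exactly the step I flagged: passing from boundedness of $\{e^{(j)}\}$ to boundedness of its absolutely convex hull, which is what makes the tail estimate work. In a $BK$-space this is immediate, since boundedness means a uniform norm bound $\sup_{j}\|e^{(j)}\|=C<\infty$, and then $\sum_{k}\|x_{k}e^{(k)}\|\le C\sum_{k}|x_{k}|<\infty$ gives absolute, hence ordinary, convergence of $\sum_{k}x_{k}e^{(k)}$ directly. In a general $FK$-space the paranorm is not homogeneous and the convex hull of a bounded set need not be bounded, so the argument genuinely rests on local convexity; this is the point I would need to secure to complete the proof in full generality, and it is where I would expect the real work to lie. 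Since every space treated in this paper is a $BK$-space, the uniform-bound form of the argument suffices for all intended applications.
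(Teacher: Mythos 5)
The paper itself offers no proof of this statement; it is quoted verbatim from Bennett's paper \cite{Bennet}, so there is nothing internal to compare against. Judged on its own, your argument is correct and is essentially the classical one. The forward direction via the closed graph theorem (the graph of the inclusion $\ell_{1}\hookrightarrow X$ is closed because coordinates are continuous on both spaces, hence the inclusion is continuous and carries the bounded set $\{e^{(j)}\}$ to a bounded set) is exactly the standard argument. For the converse, the obstacle you flag -- boundedness of the absolutely convex hull -- is genuine only under the paper's stated definition of an $FK$-space, which omits local convexity; in the standard definition used by Bennett, Zeller and Wilansky (whose book the paper cites for these facts), an $FK$-space is a \emph{locally convex} complete metric $K$-space, and then the issue evaporates. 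Indeed, with the topology given by seminorms $q_{m}$, boundedness of $\{e^{(j)}\}$ means $\sup_{j}q_{m}(e^{(j)})=C_{m}<\infty$ for each $m$, and the tail estimate $q_{m}\bigl(\sum_{k=n+1}^{N}x_{k}e^{(k)}\bigr)\leq C_{m}\sum_{k>n}|x_{k}|$ gives the Cauchy property directly, without passing through the convex hull. Your caution is well placed: if local convexity really were dropped, the statement would be false -- $\ell_{p}$ with $0<p<1$ is a complete metric $K$-space in which $\{e^{(j)}\}$ is bounded, yet it does not contain $\ell_{1}$ -- so the local convexity you identify is not a removable convenience but the exact hypothesis on which the theorem rests. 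The remaining steps (identifying the limit $y$ with $x$ via the continuous coordinate functionals) are fine.
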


\begin{thm}
Let $A$ be a matrix and $X$ an $FK-$sace. Then $A$ maps $\ell_{1}$ into $X$ if and only if the columns of $A$
 belong to $X$ and form a bounded subset there.
\end{thm}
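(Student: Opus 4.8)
The plan is to reduce the statement to the two immediately preceding theorems: Zeller's theorem, which guarantees that the matrix domain $X_{A}$ is itself an $FK$-space, and Bennett's inclusion theorem, which characterises when an $FK$-space contains $\ell_{1}$. The bridge is the tautology that $A$ maps $\ell_{1}$ into $X$ if and only if $\ell_{1}\subseteq X_{A}$, where $X_{A}=\{x\in\omega:Ax\in X\}$ as in (\ref{eq0}). Throughout I write $b^{(k)}=Ae^{(k)}=(a_{nk})_{n\in\mathbb{N}}$ for the $k$-th column of $A$, and I record the elementary fact that $e^{(k)}\in X_{A}$ if and only if $b^{(k)}=Ae^{(k)}\in X$; in particular the columns of $A$ belong to $X$ exactly when $\phi\subseteq X_{A}$.

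First I would invoke Zeller's theorem to see that $X_{A}$ is an $FK$-space whose topology is generated by the three families of seminorms $x\mapsto|x_{i}|$, $x\mapsto\sup_{n}\bigl|\sum_{j=1}^{n}a_{ij}x_{j}\bigr|$ and $x\mapsto q_{n}(Ax)$, where $\{q_{n}\}$ generates the topology of $X$. Since this topology is presented through an explicit countable family of seminorms, a subset of $X_{A}$ is bounded precisely when it is bounded under each of these seminorms. I would then apply Bennett's inclusion theorem to the $FK$-space $X_{A}$: one has $\ell_{1}\subseteq X_{A}$ if and only if $\{e^{(j)}\}$ is a bounded subset of $X_{A}$, where the phrase ``bounded subset'' already forces $e^{(j)}\in X_{A}$, i.e. the columns to lie in $X$. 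The whole proof then comes down to translating ``the set $\{e^{(j)}\}$ is bounded in $X_{A}$'' into a condition on the columns.

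The computation of the three seminorms on the unit vectors is the heart of the matter. On $e^{(m)}$ the first family gives $|\delta_{im}|\le 1$, so it is automatically bounded; the second gives $\bigl|\sum_{j\le n}a_{ij}(e^{(m)})_{j}\bigr|=|a_{im}|$, whose supremum over $m$ is finite because the coordinate functional $y\mapsto y_{i}$ is continuous on $X$ and hence carries the bounded set $\{b^{(m)}\}$ to the bounded scalar set $\{a_{im}\}_{m}$; and the third gives $q_{n}(Ae^{(m)})=q_{n}(b^{(m)})$, whose supremum over $m$ is finite exactly when $\{b^{(m)}\}$ is a bounded subset of $X$. Thus $\{e^{(j)}\}$ is bounded in $X_{A}$ if and only if the columns of $A$ lie in $X$ and form a bounded subset there, and chaining the equivalences $A\in(\ell_{1}:X)\Leftrightarrow\ell_{1}\subseteq X_{A}\Leftrightarrow\{e^{(j)}\}\text{ bounded in }X_{A}\Leftrightarrow\text{(columns in }X\text{, bounded)}$ settles both directions at once.

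The step I expect to be the main obstacle is the second seminorm family, i.e. verifying that column-boundedness in $X$ already forces $\sup_{m}|a_{im}|<\infty$ for each $i$; this is exactly the point where the $FK$-structure of $X$, namely continuity of the coordinate functionals so that they preserve boundedness, is indispensable, and without it the partial-sum seminorms of $X_{A}$ need not be controlled on $\{e^{(j)}\}$. A secondary care point is the equivalence between topological boundedness in $X_{A}$ and seminorm-by-seminorm boundedness, which is legitimate precisely because Zeller's theorem presents the topology through the explicit seminorm families above. If one preferred to avoid the $X_{A}$-construction, the forward direction could instead be obtained from the fact that a matrix map between $FK$-spaces is continuous and therefore sends the bounded set $\{e^{(k)}\}\subset\ell_{1}$ to a bounded set in $X$, but routing everything through $X_{A}$ keeps the two directions uniform.
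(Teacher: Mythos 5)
Your argument is correct and complete; note, however, that the paper itself offers no proof of this statement --- it is simply quoted from Bennett's paper \cite{Bennet} alongside the other inclusion theorems, so there is nothing in the text to compare against line by line. What you have reconstructed is in fact the standard derivation (and essentially Bennett's own): pass to the matrix domain $X_{A}$, use Zeller's theorem to see that $X_{A}$ is an $FK$-space with the three explicit seminorm families, apply the $\ell_{1}$-inclusion theorem to $X_{A}$, and then unwind what boundedness of $\{e^{(j)}\}$ means seminorm by seminorm. The two places where a blind attempt could go wrong are both handled properly: you observe that the partial-sum seminorms $\sup_{n}\bigl|\sum_{j\le n}a_{ij}e^{(m)}_{j}\bigr|=|a_{im}|$ are controlled because the coordinate functionals of the $FK$-space $X$ are continuous and hence carry the bounded column set to a bounded scalar set (this also guarantees that the rows lie in $\ell_{1}^{\beta}=\ell_{\infty}$, so $Ax$ exists for every $x\in\ell_{1}$), and you justify testing boundedness seminorm by seminorm via the explicit countable generating family from Zeller's theorem. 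The only cosmetic caveat is that your proof is conditional on the two quoted theorems, which the paper likewise states without proof; as a self-contained verification of the statement it therefore inherits whatever is assumed there, but as a derivation from the surrounding results it is sound.
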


\begin{thm}\label{thmbennet1}
An $FK-$space contains $bv_{0}(bv)$ if and only if $(e\in E)$ and $\{\sum_{j=1}^{n}e^{(j)}: j=1,2,\ldots\}$
is a bounded subsets of $X$.
\end{thm}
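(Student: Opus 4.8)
The plan is to reduce the statement to the preceding theorem of Bennett, which characterises when a matrix sends $\ell_{1}$ into an $FK$-space $X$ (namely, exactly when the columns of the matrix lie in $X$ and form a bounded subset there). To apply it I would first realise $bv_{0}$ as the image of $\ell_{1}$ under an explicit summation matrix. Define $\Sigma=(\sigma_{nk})$ by $\sigma_{nk}=-1$ for $k>n$ and $\sigma_{nk}=0$ for $k\leq n$. For $y\in\ell_{1}$ the series $(\Sigma y)_{n}=-\sum_{k>n}y_{k}$ converges absolutely, so $\Sigma y$ is well defined and lies in $bv_{0}$; conversely a short telescoping computation gives $(\Sigma\Delta x)_{n}=-\sum_{k>n}(x_{k}-x_{k-1})=x_{n}$ for every $x\in bv_{0}$ (the tail telescopes and $x_{m}\to0$). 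Hence $\Sigma(\ell_{1})=bv_{0}$, so the set inclusion $bv_{0}\subseteq X$ is \emph{equivalent} to the assertion that $\Sigma$ maps $\ell_{1}$ into $X$.

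Next I would compute the columns of $\Sigma$. The $k$-th column is $\Sigma e^{(k)}=-\sum_{j=0}^{k-1}e^{(j)}$, that is (up to sign, and up to the fixed summand $e^{(0)}$, which does not affect boundedness) precisely the initial segment $\sum_{j=1}^{n}e^{(j)}$ appearing in the statement. By Bennett's theorem, $\Sigma$ maps $\ell_{1}$ into $X$ if and only if these columns belong to $X$ and are bounded in $X$; combined with the previous paragraph this settles the $bv_{0}$ case: $bv_{0}\subseteq X$ iff $\{\sum_{j=1}^{n}e^{(j)}:n\geq1\}$ is a bounded subset of $X$. Observe that the hypothesis ``$\{\dots\}$ is a bounded subset of $X$'' already carries the membership requirement in Bennett's theorem, so nothing extra needs to be checked there.

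For the space $bv$ I would use the decomposition $bv=bv_{0}+\{e\}$ recorded in Section~2. Consequently $bv\subseteq X$ holds if and only if $bv_{0}\subseteq X$ \emph{and} $e\in X$: the first condition is exactly the bounded-columns condition just obtained, and the second is the extra clause $e\in X$ in the statement. The forward implication is immediate, since $e\in bv\subseteq X$ and $bv_{0}\subseteq bv\subseteq X$; the reverse implication follows because every element of $bv$ is the sum of an element of $bv_{0}\subseteq X$ and a scalar multiple of $e\in X$, hence lies in $X$.

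The step carrying the real content is the matrix representation $bv_{0}=\Sigma(\ell_{1})$ together with the identification of its columns: once this is in place, both directions of the equivalence are delivered by Bennett's matrix theorem, which silently absorbs the continuity of the inclusion $bv_{0}\hookrightarrow X$ (guaranteed by the closed graph theorem for $FK$-spaces) in its forward half. The only points demanding care are the absolute convergence and telescoping establishing $\Sigma\Delta=\mathrm{id}$ on $bv_{0}$ and $\Sigma(\ell_{1})\subseteq bv_{0}$, and the harmless bookkeeping reconciling the index range $j=0,\dots,k-1$ of the columns with the range $j=1,\dots,n$ used in the statement.
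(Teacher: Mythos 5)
Your argument is correct. Note that the paper itself supplies no proof of this statement---it is imported verbatim from Bennett \cite{Bennet}---so there is nothing internal to compare against; your reduction is, however, exactly the standard route Bennett takes: realise $bv_{0}$ as $\Sigma(\ell_{1})$ for the summation matrix, so that $bv_{0}\subseteq X$ becomes ``$\Sigma$ maps $\ell_{1}$ into $X$,'' apply the preceding column-boundedness criterion, and handle $bv$ via $bv=bv_{0}+\{e\}$. The only point worth stating explicitly in a write-up is the index-convention issue you already flag: with $0$-based indexing the columns of $\Sigma$ are $-\sum_{j=0}^{k-1}e^{(j)}$, and one should either adopt Bennett's $1$-based convention or observe that column $k=1$ forces $e^{(0)}\in X$, so the two families of partial sums differ by a fixed element of $X$ and are bounded simultaneously.
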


\begin{thm}
Let $A$ be a matrix and $X$ an $FK-$sace. Then $A$ maps $bv_{0}$ into $X$ if and only if the columns of $A$
 belong to $X$ and their partial sums form a bounded subset there.
\end{thm}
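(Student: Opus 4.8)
The plan is to exploit the ``block'' sequences $\sigma^{(n)}:=\sum_{j=0}^{n}e^{(j)}$, i.e. the sequence with $1$ in its first $n+1$ coordinates and $0$ afterwards. First I would verify that $\{\sigma^{(n)}\}$ is a Schauder basis of $bv_{0}$: for $x\in bv_{0}$ one has $x=\sum_{n}(x_{n}-x_{n+1})\sigma^{(n)}$ with $\sum_{n}|x_{n}-x_{n+1}|<\infty$, and each $\sigma^{(n)}$ has the same $bv$-norm, so $\{\sigma^{(n)}\}$ is in particular a bounded subset of $bv_{0}$. The key observation driving everything is that $A\sigma^{(n)}=\sum_{k=0}^{n}a^{(k)}$, where $a^{(k)}=(a_{mk})_{m}$ is the $k$-th column of $A$; thus the images of the basis blocks are precisely the partial sums of the columns, which is exactly the object appearing in the statement.

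For necessity, I would argue as follows. Since every $e^{(k)}$ lies in $\ell_{1}\subset bv_{0}$, the columns $a^{(k)}=Ae^{(k)}$ automatically belong to $X$. Next, because $bv_{0}$ is a $BK$-space and $X$ an $FK$-space, the matrix map $A$ is continuous (matrix transformations between $FK$-spaces are continuous, through the closed-graph theorem and the continuity of the coordinate functionals). A continuous map sends the bounded set $\{\sigma^{(n)}\}$ to a bounded subset of $X$, and since $A\sigma^{(n)}$ is the $n$-th partial sum of the columns, this yields both required conclusions at once.

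For sufficiency, I would fix $x\in bv_{0}$, set $c_{n}=x_{n}-x_{n+1}$ so that $\sum_{n}|c_{n}|<\infty$, and form $z:=\sum_{n}c_{n}s^{(n)}$ with $s^{(n)}:=\sum_{k=0}^{n}a^{(k)}$. As $X$ is complete and its topology is generated by seminorms under each of which $\{s^{(n)}\}$ is bounded, this series is absolutely convergent seminorm-by-seminorm, hence converges to some $z\in X$. The final step is to identify $z$ with $Ax$: applying the continuous $m$-th coordinate functional and interchanging the double sum gives $z_{m}=\sum_{k}a_{mk}\sum_{n\ge k}c_{n}=\sum_{k}a_{mk}x_{k}=(Ax)_{m}$, so $Ax=z\in X$.

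The hard part will be the sufficiency bookkeeping in the non-normable $FK$ setting: absolute convergence must be checked seminorm-by-seminorm rather than in a single norm, and the identification $z=Ax$ rests on both the continuity of the coordinate functionals and a careful justification of the interchange of summation, which is licensed by $\sum_{n}|c_{n}|\,|s^{(n)}_{m}|\le(\sup_{n}|s^{(n)}_{m}|)\sum_{n}|c_{n}|<\infty$. An alternative route that avoids the basis expansion is to introduce $B=(b_{mk})$ with $b_{mk}=\sum_{j=0}^{k}a_{mj}$, show via the tail-sum isomorphism $bv_{0}\cong\ell_{1}$ that $A\in(bv_{0}:X)$ if and only if $B\in(\ell_{1}:X)$, and then quote the preceding characterization of $(\ell_{1}:X)$ together with the fact that ``partial sums in $X$ and bounded'' is equivalent to ``columns in $X$ and partial sums bounded''; there the same double-series interchange reappears as the obstacle of matching $Ax$ with $By$ under the isomorphism.
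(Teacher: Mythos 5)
The paper gives no proof of this statement at all: it is quoted verbatim from Bennett \cite{Bennet} in the section of well-known results, so there is no in-text argument to compare yours against. That said, your proof is essentially the standard (Bennett-style) one and is correct in outline. The necessity direction is fine: $a^{(k)}=Ae^{(k)}\in X$, matrix maps between FK-spaces are continuous, and a continuous linear map carries the bounded set $\{\sigma^{(n)}\}$ to a bounded set whose elements are exactly the column partial sums $s^{(n)}$. The sufficiency direction is also sound: expand $x\in bv_{0}$ along the basis $\{\sigma^{(n)}\}$ with coefficients $c_{n}=x_{n}-x_{n+1}\in\ell_{1}$, and sum $\sum_{n}c_{n}s^{(n)}$ absolutely seminorm-by-seminorm in the Fr\'echet space $X$ (local convexity is implicit in the paper's use of FK-spaces, cf.\ the quoted theorem of Zeller).

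One step deserves tightening. The identification $z=Ax$ is not literally a Fubini interchange: the double series $\sum_{n}\sum_{k\le n}|c_{n}|\,|a_{mk}|$ need not converge, so the bound $\sum_{n}|c_{n}|\,|s^{(n)}_{m}|<\infty$ that you cite does not by itself license swapping the order of summation. What does work is finite Abel summation,
\begin{eqnarray*}
\sum_{n=0}^{N}c_{n}s^{(n)}_{m}=\sum_{k=0}^{N}a_{mk}x_{k}-x_{N+1}s^{(N)}_{m},
\end{eqnarray*}
where the error term tends to zero because $x\in c_{0}$ and $\sup_{N}|s^{(N)}_{m}|<\infty$; the latter follows from the boundedness of $\{s^{(n)}\}$ in $X$ together with the continuity of the coordinate functionals. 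This same observation is what guarantees that each row of $A$ lies in $bs=bv_{0}^{\beta}$, i.e.\ that $(Ax)_{m}$ is defined at all --- a point your sketch passes over. With that repair the argument is complete; your closing alternative via $B\in(\ell_{1}:X)$ with $b_{mk}=\sum_{j=0}^{k}a_{mj}$ is also viable and is closer in spirit to how the paper treats the analogous transformation results for its spaces $bv(C)$, $bv(G)$ and $bv(R)$.
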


The important class of co-null $FK-$space was introduced by Snyder \cite{Snyder}: An $FK-$space $X$ containing $\varphi \cup \{e\}$
is said to be \emph{co-null} if $\sum_{j=1}^{\infty}e^{j}=e$ weakly in $X$.\\

Therefore, using the definition of co-null and Theorem \ref{thmbennet1}, we have:

\begin{cor}\cite{Bennet}
Any co-null $FK-$space must contain $bv$.
\end{cor}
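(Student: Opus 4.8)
The plan is to read off the two requirements of Theorem \ref{thmbennet1} (specialized to $bv$) directly from the definition of co-nullity, and then to invoke that theorem. Theorem \ref{thmbennet1} tells us that an $FK$-space $X$ contains $bv$ exactly when $e\in X$ and the partial sums $s^{(n)}:=\sum_{j=1}^{n}e^{(j)}$ form a bounded subset of $X$. So it is enough to establish these two facts.

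The first is immediate: by the very definition of a co-null space, $X$ contains $\varphi\cup\{e\}$, so in particular $e\in X$, which is the first hypothesis of Theorem \ref{thmbennet1} with nothing left to check.

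The substantive work lies in the boundedness of the partial sums. Co-nullity means precisely that $\sum_{j=1}^{\infty}e^{(j)}=e$ weakly in $X$; that is, the sequence $(s^{(n)})_{n}$ converges to $e$ in the weak topology of $X$. A weakly convergent sequence is automatically weakly bounded, since for every $f\in X^{*}$ the scalar sequence $\bigl(f(s^{(n)})\bigr)_{n}$ converges and is therefore bounded. I would then pass from weak to topological boundedness using that an $FK$-space is a Fr\'{e}chet space and hence barrelled: in a barrelled space weakly bounded sets coincide with bounded sets (equivalently, one applies the uniform boundedness principle). This yields that $\{s^{(n)}:n\geq1\}$ is a bounded subset of $X$.

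Having verified both hypotheses, $e\in X$ and the boundedness of $\{s^{(n)}\}$, Theorem \ref{thmbennet1} gives at once $bv\subseteq X$, as claimed. The only delicate point in the whole argument is the transition from weak boundedness to boundedness in $X$; everything else is formal. This is exactly where a careful write-up should concentrate, making sure the barrelledness (or, concretely, the uniform boundedness principle) is legitimately available in the $FK$-space setting.
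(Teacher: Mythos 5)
Your argument is correct and follows exactly the route the paper intends: the paper's entire justification is the sentence ``using the definition of co-null and Theorem \ref{thmbennet1}'', and you have simply filled in the two hypotheses of that theorem --- $e\in X$ from $\varphi\cup\{e\}\subset X$, and boundedness of the partial sums from weak convergence of $\sum_{j}e^{(j)}$ to $e$ together with the fact that weakly bounded sets are bounded in a (barrelled, indeed any locally convex) $FK$-space. Your write-up supplies the one genuinely nontrivial step the paper leaves implicit, and it is sound.
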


\begin{thm}[Theorem 4.3.2 in \cite{Wil84}]
Let $(X, \|.\|)$ be a $BK-$space. Then $X^{\beta}$ is a $BK-$space with $\|a\|_{\beta}=\sup\left\{\sup_{n}|\sum_{k=0}^{n}a_{k}x_{k}|: \|x\|=1\right\}$.
\end{thm}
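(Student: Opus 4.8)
The plan is to identify the quantity $\|a\|_{\beta}$ with $\sup_{n}\|f_{n}\|^{*}$, where the $f_{n}$ are the partial-sum functionals attached to $a$, to obtain finiteness from the Uniform Boundedness Principle, then to verify the norm axioms and, finally, completeness. First I would fix $a\in X^{\beta}$ and introduce, for each $n\in\mathbb{N}$, the map $f_{n}\colon X\to\mathbb{C}$ given by $f_{n}(x)=\sum_{k=0}^{n}a_{k}x_{k}$. Since $X$ is a $BK$-space, each coordinate functional $x\mapsto x_{k}$ is continuous, so $f_{n}$ is a finite linear combination of continuous functionals and hence $f_{n}\in X^{*}$ with $\|f_{n}\|^{*}=\sup_{\|x\|=1}|\sum_{k=0}^{n}a_{k}x_{k}|$. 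Because $a\in X^{\beta}$, the series $\sum_{k}a_{k}x_{k}$ converges for each $x\in X$, so the sequence $(f_{n}(x))_{n}$ is bounded for every fixed $x$. As $X$ is Banach, the Uniform Boundedness Principle gives $\sup_{n}\|f_{n}\|^{*}<\infty$, and interchanging the two suprema (both range over the same set of pairs $(x,n)$) yields
\[
\|a\|_{\beta}=\sup_{\|x\|=1}\sup_{n}\Bigl|\sum_{k=0}^{n}a_{k}x_{k}\Bigr|=\sup_{n}\|f_{n}\|^{*}<\infty .
\]
This uniform-boundedness step is the conceptual core, since a priori one only has pointwise, not uniform, control of the partial sums.

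Next I would check the vector-space and norm axioms. Linearity of $X^{\beta}$, together with absolute homogeneity and the triangle inequality for $\|\cdot\|_{\beta}$, follow at once from the linearity of each $f_{n}$ in $a$ and elementary supremum estimates. For positive-definiteness I would test against the unit sequences: evaluating $f_{n}(e^{(k)})=a_{k}$ for $n\geq k$ gives $|a_{k}|\leq\|e^{(k)}\|\,\|a\|_{\beta}$, so $\|a\|_{\beta}=0$ forces $a=0$. The very same inequality shows that every coordinate map $a\mapsto a_{k}$ is continuous on $(X^{\beta},\|\cdot\|_{\beta})$, which is exactly the $K$-space requirement; thus once completeness is established the $BK$-property follows immediately.

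Completeness is the step I expect to require the most care. Given a $\|\cdot\|_{\beta}$-Cauchy sequence $(a^{(m)})_{m}$, the coordinate estimate above shows each $(a_{k}^{(m)})_{m}$ is Cauchy in $\mathbb{C}$; let $a_{k}$ be its limit and set $a=(a_{k})$. Fixing $\varepsilon>0$ and $M$ with $\|a^{(m)}-a^{(m')}\|_{\beta}<\varepsilon$ for $m,m'\geq M$, I would let $m'\to\infty$ inside the \emph{finite} partial sums to obtain $\sup_{n}|\sum_{k=0}^{n}(a_{k}^{(m)}-a_{k})x_{k}|\leq\varepsilon$ for all $\|x\|=1$ and all $m\geq M$. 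To see that $a\in X^{\beta}$ I would fix $x$ and split a tail as $\sum_{k=p+1}^{q}a_{k}x_{k}=\sum_{k=p+1}^{q}(a_{k}-a_{k}^{(M)})x_{k}+\sum_{k=p+1}^{q}a_{k}^{(M)}x_{k}$: the first piece is at most $2\varepsilon$ by the bound just obtained, while the second is small for large $p,q$ because $a^{(M)}\in X^{\beta}$, so the partial sums of $\sum_{k}a_{k}x_{k}$ are Cauchy and the series converges. Hence $a\in X^{\beta}$, and since $X^{\beta}$ is linear the displayed estimate then reads $\|a^{(m)}-a\|_{\beta}\leq\varepsilon$ for $m\geq M$, giving norm convergence. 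Together with the continuity of the coordinates from the previous paragraph, this shows $(X^{\beta},\|\cdot\|_{\beta})$ is a complete normed $K$-space, i.e.\ a $BK$-space.
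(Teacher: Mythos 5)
The paper offers no proof of this statement at all: it is quoted as Theorem 4.3.2 of Wilansky's book and used as a black box, so the only thing to measure your argument against is the standard proof, which is essentially what you have reconstructed. Your three steps are correct and in the right order: the Uniform Boundedness Principle applied to the partial-sum functionals $f_{n}$ (each continuous because $X$ is a $BK$-space, pointwise bounded because $a\in X^{\beta}$) gives finiteness of $\|a\|_{\beta}$; the evaluation $f_{n}(e^{(k)})=a_{k}$ for $n\geq k$ gives both definiteness and the continuity of the coordinates on $X^{\beta}$; and the completeness argument correctly passes the uniform estimate to the limit through finite partial sums and then uses the $2\varepsilon$ differencing trick plus the convergence of $\sum_{k}a_{k}^{(M)}x_{k}$ to show the coordinatewise limit lies in $X^{\beta}$. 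The one point you should make explicit is the hypothesis that $X$ contains the unit sequences $e^{(k)}$, i.e.\ $\phi\subset X$: you invoke $e^{(k)}\in X$ (and $\|e^{(k)}\|\neq 0$) both for positive-definiteness and for the coordinate estimate $|a_{k}|\leq\|e^{(k)}\|\,\|a\|_{\beta}$, and without this hypothesis the theorem as stated is false, since if no $x\in X$ has $x_{k}\neq 0$ then $\|\cdot\|_{\beta}$ is only a seminorm. Wilansky's original formulation carries the assumption $\phi\subset X$; the paper's transcription silently drops it, and your proof silently restores it --- better to state it.
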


\begin{thm}[Theorem 7.2.9 in \cite{Wil84}]
The inclusion $X^{\beta} \subset X^{*}$ holds in the following sense: Let the  $ \widehat{}:X^{\beta} \rightarrow X^{*}$ be defined by
 $\widehat{}(a)=\widehat{a}: X \rightarrow \mathbb{C}, (a\in X^{\beta})$ where $\widehat{a}(x)=\sum_{k=0}^{\infty}a_{k}x_{k}$ for all
 $x\in X$. Then $\widehat{}$ is an isomorphism into $X^{*}$. If $X$ has $AK$, then the map $\widehat{}$ is onto $X^{*}$.
\end{thm}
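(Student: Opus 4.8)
The plan is to verify the assertions in order: that $\widehat{a}$ is a well-defined element of $X^{*}$, that $\widehat{\phantom{a}}$ is linear and injective, that it is a topological embedding (an ``isomorphism into''), and that it is surjective when $X$ has $AK$. Throughout I use that $X$ is a $BK$-space, so each coordinate functional $p_{k}(x)=x_{k}$ is continuous, that $\phi\subseteq X$, and that by the preceding theorem $X^{\beta}$ is a $BK$-space under $\|a\|_{\beta}=\sup\{\sup_{n}|\sum_{k=0}^{n}a_{k}x_{k}|:\|x\|=1\}$.

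First I would show $\widehat{a}\in X^{*}$. For $a\in X^{\beta}$ and $x\in X$ the definition of the $\beta$-dual says $(a_{k}x_{k})\in cs$, i.e.\ the series $\widehat{a}(x)=\sum_{k}a_{k}x_{k}$ converges, so $\widehat{a}$ is a well-defined linear map $X\to\mathbb{C}$. Its partial sums $s_{n}(x)=\sum_{k=0}^{n}a_{k}x_{k}$ are finite linear combinations of the continuous coordinate functionals $p_{k}$, hence continuous on $X$, and satisfy $|s_{n}(x)|\le\|a\|_{\beta}\|x\|$; letting $n\to\infty$ gives $|\widehat{a}(x)|\le\|a\|_{\beta}\|x\|$, so $\widehat{a}\in X^{*}$ with $\|\widehat{a}\|^{*}\le\|a\|_{\beta}$. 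In particular $\widehat{\phantom{a}}$ is linear and norm-nonincreasing, hence continuous.

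Next, injectivity: if $\widehat{a}=0$ then, since $e^{(k)}\in\phi\subseteq X$, we get $a_{k}=\widehat{a}(e^{(k)})=0$ for every $k$, so $a=0$. To upgrade this continuous injection to an isomorphism onto its range I would produce a reverse estimate. Writing $P_{n}x=\sum_{k=0}^{n}x_{k}e^{(k)}$ for the $n$-th section, one has $P_{n}x\in\phi\subseteq X$ and $s_{n}(x)=\widehat{a}(P_{n}x)$, whence $\|s_{n}\|^{*}\le\|P_{n}\|\,\|\widehat{a}\|^{*}$ and therefore $\|a\|_{\beta}=\sup_{n}\|s_{n}\|^{*}\le(\sup_{n}\|P_{n}\|)\,\|\widehat{a}\|^{*}$. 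Combined with $\|\widehat{a}\|^{*}\le\|a\|_{\beta}$ this makes the two norms equivalent and $\widehat{\phantom{a}}$ a linear homeomorphism onto its image. I expect this to be the main obstacle, since the bound requires the section operators $P_{n}$ to be uniformly bounded on $X$; this is exactly the point where one must use the structure of $X$ (it is automatic once $X$ has $AK$, by Banach--Steinhaus applied to $P_{n}x\to x$, and can be checked directly for the concrete spaces treated later).

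Finally, surjectivity under $AK$. Given $f\in X^{*}$, set $a_{k}=f(e^{(k)})$. When $X$ has $AK$ every $x\in X$ satisfies $x=\sum_{k}x_{k}e^{(k)}=\lim_{N}P_{N}x$ in $X$, so continuity and linearity of $f$ give $f(x)=\lim_{N}\sum_{k=0}^{N}x_{k}f(e^{(k)})=\sum_{k}a_{k}x_{k}$. In particular this series converges for every $x\in X$, so $a\in X^{\beta}$ and $f=\widehat{a}$; hence $\widehat{\phantom{a}}$ maps onto $X^{*}$, completing the proof.
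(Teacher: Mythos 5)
The paper does not prove this statement at all: it is quoted as Theorem 7.2.9 of Wilansky \cite{Wil84}, so there is no in-paper argument to compare yours against. Judged on its own, your proof is the standard one and its unconditional parts are correct: convergence of $\sum_{k}a_{k}x_{k}$ for $a\in X^{\beta}$ straight from the definition of the $\beta$-dual; the bound $|\widehat{a}(x)|\le\|a\|_{\beta}\|x\|$ from the partial sums, giving $\widehat{a}\in X^{*}$ and continuity of $a\mapsto\widehat{a}$; injectivity via $a_{k}=\widehat{a}(e^{(k)})$, where you rightly invoke the hypothesis $\phi\subset X$ that Wilansky assumes but the paper's transcription suppresses; and the surjectivity argument under $AK$ via $a_{k}=f(e^{(k)})$ and $f(x)=\lim_{N}f(P_{N}x)$, which correctly yields both $a\in X^{\beta}$ and $f=\widehat{a}$.

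The one step that is not actually established is the reverse estimate $\|a\|_{\beta}=\sup_{n}\|s_{n}\|^{*}\le(\sup_{n}\|P_{n}\|)\,\|\widehat{a}\|^{*}$: for a general $BK$-space containing $\phi$ the section operators $P_{n}$ need not be uniformly bounded (this is an additional property of the space, automatic under $AK$ by Banach--Steinhaus but false in general), and you say so yourself. Whether this counts as a gap depends on how much ``isomorphism into'' is supposed to assert. Read algebraically --- a linear injection identifying $X^{\beta}$ with a subspace of $X^{*}$, which is the usual reading of Wilansky's Theorem 7.2.9 and all that this paper ever uses --- your injectivity argument already finishes the proof and the $P_{n}$ estimate is superfluous. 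Read as ``linear homeomorphism onto its range,'' with $X^{\beta}$ carrying the norm of the preceding theorem, the claim needs either the uniform sectional boundedness you isolate or a substitute (for instance closedness of the range plus the open mapping theorem), and you supply neither in general; under the $AK$ hypothesis, where the topological identification $X^{\beta}\cong X^{*}$ is actually wanted, your own Banach--Steinhaus remark does close it. So the proposal is correct in substance, with one honestly flagged conditional step that matters only under the stronger reading of the statement.
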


Now, we define the space $bv$ with a lower triangle matrix $A=(a_{nk})$ for all $k,n\in \mathbb{N}$ as below:
\begin{eqnarray}\label{trimtrx}
bv(A)=\left\{x=(x_{k})\in \omega: Ax \in bv\right\}.
\end{eqnarray}

The space $bv$ is $BK-$space with the norm $\|x\|_{bv}=\|\Delta x\|_{\ell_{1}}$ and $A$ is a triangle matrix. Then, from Theorem 4.3.2 in \cite{Wil84},
we say that the space $bv(A)$ is a $BK-$space with the norm $\|x\|_{bv(A)}=\|Ax\|_{bv}$.\\

If $A$ is a triangle, then one can easily observe that the sequence $X_{A}$ and $X$ are linearly isomorphic,i.e., $X_{A}\cong X$.
Therefore, the spaces $bv$ and $bv(A)$are norm isomorphic to the spaced $\ell_{1}$ and $bv$, respectively.

Now we give some results:
\begin{lem}\label{duallem1}
Let $A=(a_{nk})$ be an infinite matrix.  Then, the following statements hold:
\begin{itemize}
\item[(i)] $A\in (\ell_{1}:\ell_{\infty})$ if and only if
\begin{eqnarray} \label{deq1}
\sup_{k,n \in \mathbb{N}}|a_{nk}| < \infty.
\end{eqnarray}

\item[(ii)] $A\in (\ell_{1}:c)$ if and only if (\ref{deq1}) holds, and there are $\alpha_{k},\in \mathbb{C}$ such that
\begin{eqnarray}\label{deq2}
\lim_{n \rightarrow \infty} a_{nk} = \alpha_{k}~ \textrm{ for each }~k\in\mathbb{N}.
\end{eqnarray}

\item[(iii)] $A\in (\ell_{1}:\ell_{1})$ if and only if
\begin{eqnarray}\label{deq3}
\sup_{k \in \mathbb{N}}\sum_{n}|a_{nk}| < \infty.
\end{eqnarray}
\end{itemize}
\end{lem}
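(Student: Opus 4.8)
The plan is to handle all three parts by the same two-sided recipe: extract necessity by evaluating $A$ on the unit sequences $e^{(k)}$, and establish sufficiency by a direct norm estimate on the defining series. Since $\ell_{1}$, $c$ and $\ell_{\infty}$ are all $BK$-spaces, I would freely use the theorem quoted above that matrix transformations between $BK$-spaces are continuous; this is precisely the ingredient that converts the pointwise information coming from the $e^{(k)}$ into the uniform suprema appearing in (\ref{deq1}) and (\ref{deq3}).

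For (i), necessity: each $e^{(k)}\in\ell_{1}$ with $\|e^{(k)}\|_{\ell_{1}}=1$, so boundedness of $A\colon\ell_{1}\to\ell_{\infty}$ gives $\sup_{n}|a_{nk}|=\|Ae^{(k)}\|_{\ell_{\infty}}\le\|A\|$ uniformly in $k$, which is (\ref{deq1}). Conversely, if (\ref{deq1}) holds with bound $M$, then for $x\in\ell_{1}$ the estimate $|(Ax)_{n}|\le\sum_{k}|a_{nk}||x_{k}|\le M\|x\|_{\ell_{1}}$ shows simultaneously that the series converges absolutely and that $Ax\in\ell_{\infty}$. For (ii) the necessity is immediate: $c\subset\ell_{\infty}$ yields (\ref{deq1}) from part (i), and evaluating on $e^{(k)}$ forces $(a_{nk})_{n}\in c$, i.e.\ the limits $\alpha_{k}$ in (\ref{deq2}) exist. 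For sufficiency I would first observe $|\alpha_{k}|\le M$, so $(\alpha_{k})\in\ell_{\infty}$ and $\sum_{k}\alpha_{k}x_{k}$ converges for $x\in\ell_{1}$; then I would prove $(Ax)_{n}\to\sum_{k}\alpha_{k}x_{k}$ by the standard split $|\sum_{k}(a_{nk}-\alpha_{k})x_{k}|\le\sum_{k<K}|a_{nk}-\alpha_{k}||x_{k}|+2M\sum_{k\ge K}|x_{k}|$, choosing $K$ so the tail is small (using $x\in\ell_{1}$) and then letting $n\to\infty$ in the finite head via (\ref{deq2}).

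For (iii), necessity again combines column membership $Ae^{(k)}=(a_{nk})_{n}\in\ell_{1}$ with boundedness of $A\colon\ell_{1}\to\ell_{1}$, giving $\sup_{k}\sum_{n}|a_{nk}|=\sup_{k}\|Ae^{(k)}\|_{\ell_{1}}\le\|A\|$, which is (\ref{deq3}). Sufficiency is a single Tonelli interchange of the order of summation: for $x\in\ell_{1}$ one has $\sum_{n}|(Ax)_{n}|\le\sum_{n}\sum_{k}|a_{nk}||x_{k}|=\sum_{k}|x_{k}|\sum_{n}|a_{nk}|\le M\|x\|_{\ell_{1}}$, legitimate because all terms are nonnegative.

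The one genuinely non-routine point, and hence the main obstacle, is securing the uniformity of the suprema in (\ref{deq1}) and (\ref{deq3}): testing on the $e^{(k)}$ alone gives only that each column is bounded (respectively absolutely summable), and promoting this to a bound uniform in $k$ is exactly where the continuity/boundedness of $A$ between the relevant $BK$-spaces (equivalently, a uniform boundedness argument) is needed. Once that is available, every remaining step is either a direct inequality or the familiar $\varepsilon$-splitting used for part (ii).
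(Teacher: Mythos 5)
Your proof is correct, and it is complete and self-contained. The paper itself offers no proof of this lemma --- it is stated as a well-known characterization (these are the classical Stieglitz--Tietz type conditions for matrix classes with domain $\ell_{1}$) --- so there is no internal argument to compare against; your two-sided scheme (necessity by testing on the unit sequences $e^{(k)}$ together with the continuity of matrix maps between $BK$-spaces, sufficiency by direct norm estimates, the $\varepsilon$-splitting for part (ii), and Tonelli for part (iii)) is exactly the standard route and all steps check out.
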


\section{Applications}

In this section, we give some results related to the space $bv(A)$ as choose well-known matrices instead of arbitrary matrix $A$.\\

\subsection{The space $bv(C)$}

We give the matrix $C_{1}=(c_{nk})$ instead of the matrix $A=(a_{nk})$, in (\ref{trimtrx}).
Then, we obtain the space $bv(C)$ as below:
\begin{eqnarray}\label{trimtrx0}
bv(C)=\left\{x=(x_{k})\in \omega: \sum_{k}\left|\frac{1}{k+1}\sum_{j=0}^{k}x_{j}-\frac{1}{k}\sum_{j=0}^{k-1}x_{j}\right|<\infty\right\}.
\end{eqnarray}
Using the notation (\ref{eq0}), we can denotes the space $bv(C)$ as $bv(C)=(bv)_{C_{1}}=(\ell_{1})_{\Delta.C_{1}}$, where $\Phi=\phi_{nk}=\Delta.C_{1}$ defined by $\phi_{nk}=\frac{-1}{(n+1).n} \quad (0\leq k<n)$; $\phi_{nk}=\frac{1}{n} \quad (k=n)$ and $\phi_{nk}=0 \quad (k>n)$ for all $n$.\\

The $\Phi-$transform of the sequence $x=(x_{k})$ defined by
\begin{eqnarray}\label{seqtrns}
y_{k}=\frac{x_{k}}{k+1}-\frac{1}{k(k+1)}\sum_{j=0}^{k}x_{j}.
\end{eqnarray}

\begin{thm}
Define a sequence $t^{(k)}=\{t_{n}^{(k)}\}_{n\in\mathbb{N}}$ of elements of the space $bv(C)$ for every fixed $k\in \mathbb{N}$ by
\begin{eqnarray*}
t_{n}^{(k)}= \left\{ \begin{array}{ccl}
(k+1)&, & \quad (n=k)\\
1&, & \quad (n<k)\\
0&, & \quad (n>k)
\end{array} \right.
\end{eqnarray*}
Therefore, the sequence $\{t^{(k)}\}_{k\in\mathbb{N}}$ is a basis for the space $bv(C)$ and any $x\in bv(C)$ has a unique representation of the form
\begin{eqnarray*}
x=\sum_{k}(\Phi x)_{k}t^{(k)}.
\end{eqnarray*}
\end{thm}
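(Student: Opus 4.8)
The plan is to exploit the linear norm isomorphism $\Phi : bv(C)\to\ell_{1}$, $x\mapsto\Phi x$, that was already recorded above: since $bv(C)=(\ell_{1})_{\Phi}$ with $\Phi=\Delta C_{1}$ a triangle, the map $\Phi$ is a norm-preserving bijection with $\|x\|_{bv(C)}=\|\Phi x\|_{\ell_{1}}$, and its inverse is again a (continuous) triangle. The entire argument then reduces to transporting the canonical Schauder basis $\{e^{(k)}\}$ of $\ell_{1}$ back through $\Phi^{-1}$, using that $\ell_{1}$ has the $AK$ property.

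First I would verify the single computational identity $\Phi t^{(k)}=e^{(k)}$ for every $k$; equivalently, that the sequences $t^{(k)}$ are exactly the columns of the inverse triangle $\Phi^{-1}$. Because $\Phi$ is a triangle this inverse exists and is itself a triangle, so for each fixed pair of indices this is a finite sum: one evaluates $(\Phi t^{(k)})_{m}=\sum_{n}\phi_{mn}t_{n}^{(k)}$ and checks that it equals $\delta_{mk}$, splitting into the cases $m<k$, $m=k$, $m>k$ dictated by the support of $t^{(k)}$. I expect this telescoping verification to be the main (though routine) obstacle, since it is precisely where the constants $k+1$, $1$, $0$ in the definition of $t^{(k)}$ are pinned down, and where the explicit entries $\phi_{nk}$ enter.

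Granting $\Phi t^{(k)}=e^{(k)}$, the representation follows formally. Fix $x\in bv(C)$ and set $y=\Phi x\in\ell_{1}$. Since $\ell_{1}$ has $AK$, $y=\sum_{k}y_{k}e^{(k)}$ with convergence in the $\ell_{1}$-norm, that is $\|y-\sum_{k=0}^{n}y_{k}e^{(k)}\|_{\ell_{1}}\to 0$. Applying the continuous map $\Phi^{-1}$ and using $\Phi^{-1}e^{(k)}=t^{(k)}$ together with $y_{k}=(\Phi x)_{k}$ yields $x=\sum_{k}(\Phi x)_{k}t^{(k)}$, where the convergence is now in the $bv(C)$-norm precisely because $\|x-\sum_{k=0}^{n}(\Phi x)_{k}t^{(k)}\|_{bv(C)}=\|y-\sum_{k=0}^{n}y_{k}e^{(k)}\|_{\ell_{1}}$ by the isometry. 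This is the step that upgrades mere coordinatewise convergence to genuine Schauder-basis convergence, and it is exactly where continuity of $\Phi^{-1}$ (equivalently, that $\Phi$ is a $BK$-isomorphism) is used.

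Finally I would settle uniqueness: if $x=\sum_{k}\mu_{k}t^{(k)}$ in the $bv(C)$-norm, then applying the continuous map $\Phi$ term by term gives $\Phi x=\sum_{k}\mu_{k}e^{(k)}$ in $\ell_{1}$, and uniqueness of the expansion with respect to $\{e^{(k)}\}$ in $\ell_{1}$ forces $\mu_{k}=(\Phi x)_{k}$ for all $k$. Hence $\{t^{(k)}\}_{k\in\mathbb{N}}$ is a Schauder basis for $bv(C)$ with the asserted coefficients, which completes the proof.
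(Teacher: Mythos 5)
Your strategy --- transporting the canonical $AK$ basis $\{e^{(k)}\}$ of $\ell_{1}$ back through the triangle isomorphism $\Phi^{-1}$ --- is exactly the argument the paper intends: the paper gives no proof of its own and merely points to Theorem 3.1 of Ba\c{s}ar--Altay, which runs along precisely these lines, and your handling of norm convergence and of uniqueness via the isometry is correct. The problem is that the one step you defer, the verification $\Phi t^{(k)}=e^{(k)}$, is where all the content of the theorem sits, and for the $t^{(k)}$ as printed it fails. Carrying out the inversion: from $y_{n}=\frac{1}{n+1}\sum_{j=0}^{n}x_{j}$ one recovers $x_{n}=(n+1)y_{n}-ny_{n-1}$, so $C_{1}^{-1}$ has diagonal entries $n+1$ and subdiagonal entries $-n$, and composing with $\Delta^{-1}=S$ (the summation matrix) gives $(\Phi^{-1})_{nk}=k+1$ for $n=k$, $(\Phi^{-1})_{nk}=1$ for $n>k$, and $0$ for $n<k$. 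The statement of the theorem has the two off-diagonal cases interchanged ($1$ for $n<k$ and $0$ for $n>k$). With the printed $t^{(k)}$ one computes $(C_{1}t^{(k)})_{m}=1$ for $m<k$ and $(C_{1}t^{(k)})_{m}=\frac{2k+1}{m+1}$ for $m\geq k$, whence $(\Phi t^{(k)})_{k}=\frac{k}{k+1}$ and $(\Phi t^{(k)})_{m}=-\frac{2k+1}{m(m+1)}\neq 0$ for $m>k$; so $\Phi t^{(k)}\neq e^{(k)}$ and the asserted expansion $x=\sum_{k}(\Phi x)_{k}t^{(k)}$ does not hold for these $t^{(k)}$.

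So your plan is the right one and would succeed verbatim once $t^{(k)}$ is replaced by the actual $k$-th column of $\Phi^{-1}$ (namely $0$ for $n<k$, $k+1$ for $n=k$, $1$ for $n>k$); but as submitted the proof has a genuine gap, because the pivotal identity you assume without checking is false for the sequences named in the statement. The lesson is that in this kind of ``transport the basis'' argument the only nontrivial mathematics is the explicit computation of $\Phi^{-1}$, and it cannot be left as an expectation --- here it is precisely the step that exposes the misprint.
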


This theorem can be proved as Theorem 3.1 in \cite{AB2}, we omit details.\\

\begin{thm}
The $\alpha-$dual of the space $bv(C)$ is the set
\begin{eqnarray*}
d_{1}=\left\{a=(a_{k})\in \omega: \sup_{k\in \mathbb{N}}\sum_{n}\left|b_{nk}\right|<\infty \right\}
\end{eqnarray*}
where the matrix $B=(b_{nk})$ is defined via the sequence $a=(a_{n})\in\omega$ by $(n+1)a_{n} \quad (n=k)$, $a_{k} \quad (k<n)$
and $0 \quad (k>n)$.
\end{thm}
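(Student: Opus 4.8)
The plan is to reduce the computation of $bv(C)^{\alpha}$ to the already-characterized matrix class $(\ell_{1}:\ell_{1})$ and then quote Lemma \ref{duallem1}(iii). By the definition of the $\alpha$-dual, $a=(a_{k})\in bv(C)^{\alpha}$ if and only if $ax=(a_{n}x_{n})\in\ell_{1}$ for every $x\in bv(C)$. First I would parametrize $bv(C)$ by $\ell_{1}$: since $\Phi=\Delta\cdot C_{1}$ is a triangle it is invertible on $\omega$, so $x\in bv(C)=(\ell_{1})_{\Phi}$ exactly when $y:=\Phi x\in\ell_{1}$, and the map $y\mapsto x$ is a bijection of $\ell_{1}$ onto $bv(C)$. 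The basis representation established just above, $x=\sum_{k}(\Phi x)_{k}t^{(k)}$, furnishes the explicit inverse, giving for each fixed $n$ the finite sum
\begin{eqnarray*}
x_{n}=\sum_{k}t_{n}^{(k)}\,y_{k}.
\end{eqnarray*}

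Next I would substitute this into $a_{n}x_{n}$. Setting $b_{nk}:=a_{n}t_{n}^{(k)}$, one obtains
\begin{eqnarray*}
a_{n}x_{n}=a_{n}\sum_{k}t_{n}^{(k)}y_{k}=\sum_{k}b_{nk}y_{k}=(By)_{n},
\end{eqnarray*}
and a direct evaluation of the diagonal and off-diagonal values of $t_{n}^{(k)}$ produces exactly the matrix $B=(b_{nk})$ of the statement. Thus $ax=By$ for the corresponding $y\in\ell_{1}$, and since $y$ ranges over all of $\ell_{1}$ as $x$ ranges over $bv(C)$, we get the equivalence: $a\in bv(C)^{\alpha}$ if and only if $By\in\ell_{1}$ for every $y\in\ell_{1}$, i.e. if and only if $B\in(\ell_{1}:\ell_{1})$.

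To finish, I would apply Lemma \ref{duallem1}(iii), which characterizes $B\in(\ell_{1}:\ell_{1})$ by $\sup_{k\in\mathbb{N}}\sum_{n}|b_{nk}|<\infty$. This is precisely the condition defining $d_{1}$, so the chain $a\in bv(C)^{\alpha}\Leftrightarrow B\in(\ell_{1}:\ell_{1})\Leftrightarrow a\in d_{1}$ closes and gives $bv(C)^{\alpha}=d_{1}$.

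I expect the one genuinely delicate point to be the central equivalence ``$ax\in\ell_{1}$ for all $x\in bv(C)$'' $\Leftrightarrow$ ``$B\in(\ell_{1}:\ell_{1})$''. It rests on two facts that must be checked rather than assumed: that $y\mapsto x=\Phi^{-1}y$ is a genuine bijection of $\ell_{1}$ onto $bv(C)$, so that $y$ is an arbitrary $\ell_{1}$-sequence and no elements of $bv(C)$ are missed; and that $(By)_{n}$ really equals $a_{n}x_{n}$ with no convergence or rearrangement subtlety, which holds because $\Phi$ is triangular, whence each row of $B$ has only finitely many nonzero entries and $\sum_{k}b_{nk}y_{k}$ is a finite sum for every $n$. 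Once these are secured, the explicit computation of $B$ from $a$ and the invocation of the lemma are purely routine.
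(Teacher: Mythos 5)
Your argument is correct and is essentially the paper's own proof: both invert the triangle $\Phi=\Delta\cdot C_{1}$ to write $x_{n}=\sum_{k=0}^{n-1}y_{k}+(n+1)y_{n}$, identify $a_{n}x_{n}=(By)_{n}$, and reduce the claim to $B\in(\ell_{1}:\ell_{1})$ via Lemma \ref{duallem1}(iii). The only caveat is that the direct computation yields $b_{nk}=a_{n}$ (not $a_{k}$) for $k<n$, and it requires the corrected form of $t^{(k)}$ (supported on $n\geq k$, as for the basis of $bv_{p}$) so that $\sum_{k}t_{n}^{(k)}y_{k}$ is indeed a finite sum; both points are typographical slips in the paper's statements rather than defects of your argument.
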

\begin{proof}
Let $a=(a_{n})\in \omega$. Using the relation (\ref{seqtrns}), we obtain
\begin{eqnarray}\label{alphaequ}
a_{n}x_{n}=\sum_{n}\left|\sum_{k=0}^{n-1}y_{k}+(n+1)y_{n}\right|a_{n}=(By)_{n}
\end{eqnarray}
It follows from (\ref{alphaequ}) that $ax=(a_{n}x_{n})\in\ell_{1}$ whenever $x\in bv(C)$ if and only if $By\in \ell_{1}$ whenever $y\in \ell_{1}$. We obtain that $a\in\left[bv(C)\right]^{\alpha}$ whenever $x\in bv(C)$ if and only if $B\in (\ell_{1}:\ell_{1})$. Therefore, we get by Lemma \ref{duallem1} (iii) with $B$ instead of $A$ that $a\in\left[bv(C)\right]^{\alpha}$ if and only if $\sup_{k\in \mathbb{N}}\sum_{n}\left|b_{nk}\right|<\infty$.
This gives us the result that $\left[bv(C)\right]^{\alpha}=d_{1}$.
\end{proof}

\begin{thm}
Define the sets by
\begin{eqnarray*}
d_{2}&=&\left\{a=(a_{k})\in \omega: \lim_{n}d_{nk} ~ \textrm{ exists for each }~k\in\mathbb{N}\right\}\\
d_{3}&=&\left\{a=(a_{k})\in \omega: \sup_{k}\sum_{n}\left|d_{nk}\right|<\infty \right\}
\end{eqnarray*}
where the matrix $D=(d_{nk})$ is defined via the sequence $a=(a_{n})\in\omega$ by $(n+1)a_{n} \quad (n=k)$, $\sum_{j=k}^{n}a_{j} \quad (k>n)$
and $0 \quad (k<n)$. Then, $\left[bv(C)\right]^{\beta}=d_{2}\cap d_{3}$.
\end{thm}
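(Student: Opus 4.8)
The plan is to turn the computation of $[bv(C)]^{\beta}$ into a matrix–class membership problem via the isomorphism $bv(C)\cong\ell_{1}$. Recall that $bv(C)=(\ell_{1})_{\Phi}$ with $\Phi=\Delta\, C_{1}$ a triangle, so that $x\in bv(C)$ if and only if $y=\Phi x\in\ell_{1}$. The first step is to make the inverse relation explicit. Inverting the $\Phi$-transform (\ref{seqtrns}) — equivalently, summing the identity $(C_{1}x)_{n}=\sum_{k=0}^{n}y_{k}$ and differencing — gives $x_{n}=(n+1)y_{n}+\sum_{k=0}^{n-1}y_{k}$, which is exactly the representation already exploited in (\ref{alphaequ}) and encoded by the basis sequences $t^{(k)}$. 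This pins down the inverse triangle $V=\Phi^{-1}$, whose entries are $v_{nn}=n+1$, $v_{nk}=1$ for $k<n$, and $v_{nk}=0$ for $k>n$.

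Next I would substitute this into a partial sum of the series $\sum_{n}a_{n}x_{n}$. Writing $\sum_{n=0}^{m}a_{n}x_{n}=\sum_{n=0}^{m}a_{n}\bigl[(n+1)y_{n}+\sum_{k<n}y_{k}\bigr]$ and interchanging the (finite) order of summation, one collects the coefficient of each $y_{k}$ and obtains $\sum_{n=0}^{m}a_{n}x_{n}=\sum_{k}d_{mk}y_{k}=(Dy)_{m}$, where $d_{nk}=\sum_{j=k}^{n}a_{j}v_{jk}$ is precisely the matrix $D$ of the statement (diagonal entry $(n+1)a_{n}$, the off-diagonal partial sums below the diagonal, and $0$ above it). Consequently $ax\in cs$ for every $x\in bv(C)$ if and only if $Dy\in c$ for every $y\in\ell_{1}$, that is, if and only if $D\in(\ell_{1}:c)$. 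This reduction is exactly Theorem \ref{mtrxtool} specialized to $\lambda=\ell_{1}$ and $U=\Phi$, which I would cite to package it cleanly rather than redo the Abel-summation estimates by hand.

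Finally I would invoke Lemma \ref{duallem1}(ii): $D\in(\ell_{1}:c)$ holds if and only if the two conditions (\ref{deq1}) and (\ref{deq2}) are satisfied by the entries $d_{nk}$. Condition (\ref{deq2}), the existence of $\lim_{n}d_{nk}$ for each fixed $k$, is exactly membership in $d_{2}$, while the boundedness condition (\ref{deq1}) on the entries of $D$ is the condition defining $d_{3}$. Intersecting the two characterizations yields $[bv(C)]^{\beta}=d_{2}\cap d_{3}$.

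The routine parts are the order-of-summation interchange and the verification that the collected coefficients coincide with the stated entries of $D$; both mirror the already-completed $\alpha$-dual argument. The step deserving the most care is the equivalence $ax\in cs \text{ for all } x\in bv(C)\iff D\in(\ell_{1}:c)$: one must check that the finite rearrangement producing $(Dy)_{m}$ is legitimate for each $m$ and that passing to $m\to\infty$ genuinely transfers convergence of the series to the column-limit and boundedness conditions on $D$. This is precisely the place where appealing to Theorem \ref{mtrxtool} (instead of estimating the tails directly) carries the main weight of the proof.
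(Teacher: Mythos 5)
Your argument is essentially the paper's own proof: the same inverse relation $x_{n}=(n+1)y_{n}+\sum_{k=0}^{n-1}y_{k}$, the same identity $\sum_{k=0}^{n}a_{k}x_{k}=(Dy)_{n}$, the same reduction of ``$ax\in cs$ for all $x\in bv(C)$'' to $D\in(\ell_{1}:c)$, and the same appeal to Lemma \ref{duallem1}(ii). One point deserves correction, and it is a slip the paper itself also makes: the boundedness condition delivered by Lemma \ref{duallem1}(ii) is (\ref{deq1}), i.e.\ $\sup_{n,k}|d_{nk}|<\infty$, whereas the set $d_{3}$ in the statement is defined by $\sup_{k}\sum_{n}|d_{nk}|<\infty$, which is the condition (\ref{deq3}) characterizing $(\ell_{1}:\ell_{1})$. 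These are not the same condition --- the second implies the first but not conversely --- so your closing identification of (\ref{deq1}) with ``the condition defining $d_{3}$'' is not accurate as written. What your argument (and the paper's) actually establishes is $[bv(C)]^{\beta}=d_{2}\cap\{a\in\omega:\sup_{n,k}|d_{nk}|<\infty\}$; to obtain the theorem exactly as stated one would have to either redefine $d_{3}$ with the supremum over entries, or justify separately why the stronger column-sum condition is equivalent here.
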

\begin{proof}
Consider the equation
\begin{eqnarray}\label{betaequ}
\sum_{k=0}^{n}a_{k}x_{k}=\sum_{k=0}^{n}a_{k}\left[\sum_{j=0}^{k-1}y_{j}+(k+1)y_{k}\right]=(Dy)_{n}  \quad \quad (n\in \mathbb{N}).
\end{eqnarray}
Therefore, we deduce from Lemma \ref{duallem1} (ii) with (\ref{betaequ}) that $ax=(a_{n}x_{n})\in cs$ whenever $x\in bv(C)$ if and only if $Dy\in c$ whenever $y\in \ell_{1}$. From (\ref{deq1}) and (\ref{deq2}), we have
\begin{eqnarray*}
\lim_{n}d_{nk}=\alpha_{k}  ~ \textrm{ and }~ \sup_{k}\sum_{n}\left|d_{nk}\right|<\infty
\end{eqnarray*}
which shows that $\left[bv(C)\right]^{\beta}=d_{2}\cap d_{3}$.
\end{proof}

\begin{thm}
$\left[bv(C)\right]^{\gamma}=d_{3}$.
\end{thm}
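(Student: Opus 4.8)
The plan is to mirror the computation just carried out for the $\beta$-dual, replacing the space $cs$ by $bs$ throughout. By definition, $a\in[bv(C)]^{\gamma}$ means precisely that $ax=(a_{n}x_{n})\in bs$ for every $x\in bv(C)$, i.e.\ that the partial sums $\sum_{k=0}^{n}a_{k}x_{k}$ form a bounded sequence in $n$. First I would pass to the $\Phi$-transform $y=\Phi x$; since $bv(C)\cong\ell_{1}$ under this map, the constraint $x\in bv(C)$ is equivalent to $y\in\ell_{1}$.

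Next I would invoke the identity (\ref{betaequ}), valid at the level of finite partial sums, which writes $\sum_{k=0}^{n}a_{k}x_{k}=(Dy)_{n}$ for the very same matrix $D=(d_{nk})$ introduced in the preceding theorem. Hence $ax\in bs$ for all $x\in bv(C)$ if and only if $\{(Dy)_{n}\}_{n}$ is bounded for every $y\in\ell_{1}$, that is, if and only if $D\in(\ell_{1}:\ell_{\infty})$. Applying Lemma \ref{duallem1}(i) with $D$ in place of $A$ then characterizes this membership by the boundedness condition defining the set $d_{3}$, whence $[bv(C)]^{\gamma}=d_{3}$.

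Structurally this is the $\beta$-dual argument with only one change: because $bs$ asks for boundedness rather than convergence of $Dy$, the governing matrix class passes from $(\ell_{1}:c)$ to $(\ell_{1}:\ell_{\infty})$. Equivalently, the limit condition $d_{2}$ appearing in the $\beta$-dual $d_{2}\cap d_{3}$ is no longer required, so only $d_{3}$ survives. I expect no serious obstacle: the only points needing care are the verification of the partial-sum identity (\ref{betaequ}) uniformly in $n$ and the confirmation that the $\Phi$-transform carries the constraint $x\in bv(C)$ exactly onto $y\in\ell_{1}$. With these granted, the reduction to $(\ell_{1}:\ell_{\infty})$ and the appeal to Lemma \ref{duallem1}(i) are immediate.
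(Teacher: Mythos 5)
Your argument is exactly the paper's: both use the partial-sum identity (\ref{betaequ}) to reduce membership of $ax$ in $bs$ to the condition $D\in(\ell_{1}:\ell_{\infty})$, and then apply Lemma \ref{duallem1}(i) to obtain the condition defining $d_{3}$. The proposal is correct and simply spells out the same steps in more detail than the paper does.
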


\begin{proof}
We obtain from Lemma \ref{duallem1} (i) with (\ref{betaequ}) that $ax=(a_{n}x_{n})\in bs$ whenever $x\in bv(C)$ if and only if $Dy\in \ell_{\infty}$ whenever $y\in \ell_{1}$. Then, we see from (\ref{deq1}) that $\left[bv(C)\right]^{\gamma}=d_{3}$.\\
\end{proof}

In this subsection to use, we define the matrices for brevity that
\begin{eqnarray*}
\widetilde{a}_{nk}=\sum_{j=0}^{k-1}a_{nj}+(k+1)a_{nk} \quad ~ \textrm{ and }~ \quad \widetilde{b}_{nk}=\frac{a_{nk}}{n+1}-\frac{1}{n(n+1)}\sum_{j=0}^{n}a_{jk}
\end{eqnarray*}
for all $k,n \in \mathbb{N}$.

\begin{thm}
Suppose that the entries of the infinite matrices $A=(a_{nk})$ and $E=(e_{nk})$ are connected with the relation
\begin{eqnarray}\label{mtrtrf1}
e_{nk}=\widetilde{a}_{nk}
\end{eqnarray}
for all $k,n\in \mathbb{N}$ and $Y$ be any given sequence space. Then, $A\in (bv(C):Y)$ if and only if $\{a_{nk}\}_{k\in \mathbb{N}}\in [bv(C)]^{\beta}$ for all $n\in \mathbb{N}$ and $E\in (\ell_{1}:Y)$.
\end{thm}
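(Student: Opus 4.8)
The plan is to reduce the statement about matrix transformations on $bv(C)$ to the already-understood case of $\ell_1$, exploiting the isomorphism $bv(C)\cong \ell_1$ induced by the $\Phi$-transform. Recall that by (\ref{seqtrns}) the $\Phi$-transform sends $x\in bv(C)$ to $y=\Phi x\in\ell_1$, and by the basis representation of the preceding theorem any $x\in bv(C)$ satisfies $x_k=\sum_{j=0}^{k-1}y_j+(k+1)y_k$ in terms of $y=\Phi x$. This is exactly the inverse relation packaged in the definition of $\widetilde{a}_{nk}$, so the matrix $E$ defined by (\ref{mtrtrf1}) is designed to satisfy $Ax=Ey$ whenever $x\in bv(C)$ and $y=\Phi x$.

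First I would write out, for a fixed row $n$ and a truncation index $m$, the finite sum $\sum_{k=0}^m a_{nk}x_k$ and substitute $x_k=\sum_{j=0}^{k-1}y_j+(k+1)y_k$, then interchange the order of summation to collect the coefficient of each $y_k$; this coefficient is precisely $\widetilde{a}_{nk}=\sum_{j=0}^{k-1}a_{nj}+(k+1)a_{nk}=e_{nk}$. The necessity of the condition $\{a_{nk}\}_{k\in\mathbb{N}}\in[bv(C)]^\beta$ for all $n$ is what guarantees that the series $\sum_k a_{nk}x_k$ actually converges for every $x\in bv(C)$, so that $(Ax)_n$ is well defined; without this the $A$-transform need not exist. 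Granting convergence, I would pass to the limit $m\to\infty$ to obtain the identity $(Ax)_n=(Ey)_n$ for every $n$, valid for all $x\in bv(C)$ with $y=\Phi x\in\ell_1$.

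Once this term-by-term identity $Ax=Ey$ is established, the equivalence is immediate: since $x\mapsto y=\Phi x$ is a bijection between $bv(C)$ and $\ell_1$, the assertion $Ax\in Y$ for all $x\in bv(C)$ is literally the same statement as $Ey\in Y$ for all $y\in\ell_1$, i.e. $E\in(\ell_1:Y)$. For the forward direction one assumes $A\in(bv(C):Y)$, which in particular forces each row of $A$ to lie in $[bv(C)]^\beta$ (so the transform is defined), and then the identity gives $E\in(\ell_1:Y)$. For the reverse direction one assumes the $\beta$-dual condition together with $E\in(\ell_1:Y)$ and runs the same identity backward.

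The main obstacle I expect is the bookkeeping in the interchange of summation and the careful justification of passing from the finite truncation to the infinite series. The formal rearrangement producing $e_{nk}=\widetilde{a}_{nk}$ is routine, but one must be attentive that the convergence of $\sum_k a_{nk}x_k$ is not assumed a priori — it is supplied exactly by the hypothesis $\{a_{nk}\}_{k}\in[bv(C)]^\beta$, and this is why that condition appears as a separate clause rather than being absorbed into $E\in(\ell_1:Y)$. Keeping the logical role of the $\beta$-dual condition distinct from the $\ell_1$-transformation condition is the delicate point; the rest follows the template of Theorem \cite[Lemma 5.3]{AB2} and the standard isomorphism argument, so I would cite those and avoid re-deriving the routine estimates.
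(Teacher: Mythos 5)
Your proposal is correct and takes essentially the same route as the paper's own proof: both establish the identity $Ax=Ey$ by substituting the inverse $\Phi$-relation $x_k=\sum_{j=0}^{k-1}y_j+(k+1)y_k$ into a truncated sum, regrouping to produce the coefficients $\widetilde{a}_{nk}$, and letting $m\to\infty$, with the $\beta$-dual hypothesis supplying convergence and the bijection $\Phi: bv(C)\to\ell_1$ supplying the equivalence of the two transformation classes. Your explicit remark on why the $\beta$-dual condition must appear as a separate clause is a point the paper leaves implicit, but the underlying argument is identical.
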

\begin{proof}

Let $Y$ be any given sequence space. Suppose that (\ref{mtrtrf1}) holds between $A=(a_{nk})$ and $E=(e_{nk})$, and take into account that the spaces $bv(C)$ and $\ell_{1}$ are norm isomorphic.\\

Let $A\in (bv(C):Y)$ and take any $y=(y_{k})\in \ell_{1}$. Then $\Phi E$ exists and $\{a_{nk}\}_{k\in\mathbb{N}}\in \{bv(C)\}^{\beta}$ which yields that (\ref{mtrtrf1}) is necessary and $\{e_{nk}\}_{k\in\mathbb{N}}\in (\ell_{1})^{\beta}$ for each $n\in \mathbb{N}$. Hence, $Ey$ exists for each $y\in \ell_{1}$ and thus \begin{eqnarray*}
\sum_{k}e_{nk}y_{k}=\sum_{k}a_{nk}x_{k}~ \textrm{ for all }~ n\in\mathbb{N},
\end{eqnarray*}
we obtain that $Ey=Ax$ which leads us to the consequence $E\in (\ell_{1}:Y)$.\\

Conversely, let $\{a_{nk}\}_{k\in\mathbb{N}}\in \{bv(C)\}^{\beta}$ for each $n\in \mathbb{N}$ and $E\in (\ell_{1}:Y)$ hold, and take any $x=(x_{k})\in bv(C)$. Then, $Ax$ exists. Therefore, we obtain from the equality
\begin{eqnarray*}
\sum_{k=0}^{m}a_{nk}x_{k}=\sum_{k=0}^{m}a_{nk}\left[\sum_{j=0}^{k-1}y_{j}+(k+1)y_{k}\right]=\sum_{k=0}^{m}\left(\sum_{j=k}^{m-1}a_{nj}y_{k}\right)+(m+1)a_{nm}y_{m}   \quad ~ \textrm{ for all }~m,n\in \mathbb{N}
\end{eqnarray*}
as $m\rightarrow\infty$ that $Ax=Ey$ and this shows that $E\in(\ell_{1}:Y)$. This completes the proof.\\
\end{proof}

\begin{thm}
Suppose that the entries of the infinite matrices $B=(\widetilde{b}_{nk})$ and $F=(f_{nk})$ are connected with the relation
\begin{eqnarray}\label{mtrtrf2}
f_{nk}=\widetilde{b}_{nk}
\end{eqnarray}
for all $k,n\in \mathbb{N}$ and $Y$ be any given sequence space. Then, $B\in (Y:bv(C))$ if and only if $F\in (Y:\ell_{1})$.\\
\end{thm}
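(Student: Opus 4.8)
The plan is to obtain this as a direct corollary of the matrix-domain lemma recalled in Section~2, namely the result of \cite[Lemma 5.3]{AB2} that for any triangle $U$ one has $A\in(X:Y_{U})$ if and only if $UA\in(X:Y)$. The key structural fact is that, by the very construction of the space after (\ref{trimtrx0}), $bv(C)$ is the matrix domain $(\ell_{1})_{\Phi}$, where $\Phi=\Delta\cdot C_{1}$ is the triangle whose entries $\phi_{nk}$ were listed there. Thus $bv(C)$ plays the role of $Y_{U}$ with the triangle $U=\Phi$ and inner space $\ell_{1}$, which is precisely the target appearing in the statement.

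First I would apply that lemma with source space $Y$, triangle $U=\Phi$, and target $\ell_{1}$. This immediately yields that $B\in\bigl(Y:(\ell_{1})_{\Phi}\bigr)=(Y:bv(C))$ holds if and only if $\Phi B\in(Y:\ell_{1})$. All that then remains is to recognize that the matrix product $\Phi B$ is exactly $F$. Since $\Phi$ is lower triangular, for each fixed $n$ the entry $(\Phi B)_{nk}=\sum_{m}\phi_{nm}b_{mk}=\sum_{m=0}^{n}\phi_{nm}b_{mk}$ is a finite sum, so it is always well defined and no convergence question arises; substituting the explicit values $\phi_{nm}=-1/[(n+1)n]$ for $m<n$ and $\phi_{nn}=1/n$ collapses this finite combination into the normalization defining $\widetilde{b}_{nk}$, which by hypothesis (\ref{mtrtrf2}) equals $f_{nk}$. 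Hence $F=\Phi B$, and the equivalence of the preceding step reads $B\in(Y:bv(C))$ if and only if $F\in(Y:\ell_{1})$, as claimed. I would also note, in contrast to the domain-side theorem, that no auxiliary $\beta$-dual hypothesis on the rows of $B$ is needed here, because the cited lemma already packages the relevant well-definedness into the triangle structure.

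The main point requiring care will be the entry-level bookkeeping identifying $(\Phi B)_{nk}$ with $\widetilde{b}_{nk}$, together with the justification that $\Phi(Bw)=(\Phi B)w$ for every $w\in Y$, so that the two mapping conditions genuinely refer to the same sequences. Both hinge on the same feature --- that $\Phi$ is a triangle, so that every sum in the $n$-index is finite and the formal interchange $\sum_{m}\phi_{nm}\sum_{k}b_{mk}w_{k}=\sum_{k}\bigl(\sum_{m}\phi_{nm}b_{mk}\bigr)w_{k}$ is legitimate whenever $Bw$ exists. Once this associativity and the identity $F=\Phi B$ are in place, the theorem is nothing more than a transcription of the cited lemma, so I expect no further obstacle.
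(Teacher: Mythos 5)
Your proposal is correct and follows essentially the same route as the paper: the paper's proof likewise reduces everything to the identity $(Fz)_{n}=\{\Phi(Bz)\}_{n}$ for $z\in Y$ (i.e.\ $F=\Phi B$ with $\Phi=\Delta\cdot C_{1}$ a triangle) and then uses $bv(C)=(\ell_{1})_{\Phi}$, which is exactly the content of the cited Lemma~5.3 of \cite{AB2} that you invoke explicitly. The only caveat is bookkeeping the paper itself is sloppy about (the diagonal entry of $\Phi$ and the upper summation limit in $\widetilde{b}_{nk}$ are not mutually consistent as printed), but your argument is the intended one.
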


\begin{proof}
Let $z=(z_{k})\in Y$ and consider the following equality
\begin{eqnarray*}
\sum_{k=0}^{m}\widetilde{b}_{nk}z_{k}=\sum_{k=0}^{m}\left[\frac{a_{nk}}{n+1}-\frac{1}{n(n+1)}\sum_{j=0}^{n-1}a_{jk}\right]z_{k} \quad \quad ~\textrm{ for all, }~   m,n\in \mathbb{N}
\end{eqnarray*}
which yields that as $m\rightarrow \infty$ that  $(Fz)_{n}=\{\Phi(Bz)\}_{n}$ for all $n\in \mathbb{N}$. Therefore, one can observe from here that $Bz\in bv(C)$ whenever $z\in Y$ if and only if $Fz\in \ell_{1}$ whenever $z\in Y$.\\
\end{proof}

\subsection{The space $bv(G)$}

In (\ref{trimtrx}), if we choose the matrix $G=G(u,v)=(g_{nk})$ instead of the matrix $A=(a_{nk})$,
then, we obtain the space $bv(G)$ as below:
\begin{eqnarray}\label{trimtrx1}
bv(G)=\left\{x=(x_{k})\in \omega: \sum_{k}\left|\sum_{j=0}^{k}u_{k}v_{j}x_{j}-\sum_{j=0}^{k-1}u_{k-1}v_{j}x_{j}\right|<\infty\right\}.
\end{eqnarray}
Using the notation (\ref{eq0}), we can denotes the space $bv(G)$ as $bv(G)=(bv)_{G(u,v)}=(\ell_{1})_{\Delta.G(u,v)}$, where $\Gamma=\gamma_{nk}=\Delta.G(u,v)$ defined by $\gamma_{nk}=(u_{n}-u_{n-1})v_{k} \quad (1\leq k<n)$; $\gamma_{nk}=u_{n}v_{n} \quad (k=n)$ and $\gamma_{nk}=0 \quad (k>n)$ for all $n$.\\

The $\Gamma-$transform of the sequence $x=(x_{k})$ defined by
\begin{eqnarray}\label{seqtrns1}
y_{k}=\sum_{j=0}^{k-1}\left(u_{k}-u_{k-1}\right)v_{j}x_{j}+u_{k}v_{k}x_{k}.\\
\end{eqnarray}

The Theorem \ref{basisweighted} and Theorem \ref{alphaweighted} can be proved as Theorem 2.8 in \cite{AB5},\cite{AB6} and Theorem 3.6 in \cite{kirisci}, respectively.\\

\begin{thm}\label{basisweighted}
Define a sequence $s^{(k)}=\{s_{n}^{(k)}\}_{n\in\mathbb{N}}$ of elements of the space $bv(G)$ for every fixed $k\in \mathbb{N}$ by
\begin{eqnarray*}
s_{n}^{(k)}= \left\{ \begin{array}{ccl}
\frac{1}{v_{n}}\left(\frac{1}{u_{k}}-\frac{1}{u_{k-1}}\right)&, & \quad (1< k <n)\\
\frac{1}{u_{n}v_{n}}&, & \quad (n=k)\\
0&, & \quad (k>n)
\end{array} \right.
\end{eqnarray*}
Therefore, the sequence $\{s^{(k)}\}_{k\in\mathbb{N}}$ is a basis for the space $bv(G)$ and any $x\in bv(G)$ has a unique representation of the form
\begin{eqnarray*}
x=\sum_{k}(\Gamma x)_{k}s^{(k)}.\\
\end{eqnarray*}
\end{thm}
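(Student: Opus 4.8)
The plan is to realize $bv(G)$ isometrically as $\ell_{1}$ through the triangle $\Gamma=\Delta\,G(u,v)$ and then pull the canonical basis of $\ell_{1}$ back along this identification. First I would record that $T:bv(G)\to\ell_{1}$, $Tx=\Gamma x$, is a surjective linear isometry. It is norm preserving because $\|x\|_{bv(G)}=\|G(u,v)x\|_{bv}=\|\Delta\,G(u,v)x\|_{\ell_{1}}=\|\Gamma x\|_{\ell_{1}}$; it is injective since $\Gamma$ is a triangle; and it is onto because for each $y\in\ell_{1}$ the triangular system $\Gamma x=y$ has a unique solution $x=\Gamma^{-1}y\in\omega$, which lies in $bv(G)$ precisely because $\Gamma x=y\in\ell_{1}$. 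Being an isometric bijection (equivalently, a matrix map between the $BK$-spaces $bv(G)$ and $\ell_{1}$, hence continuous, with continuous inverse $\Gamma^{-1}$), $T$ is a linear homeomorphism.

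Next I would invoke the transport of the basis. Since $\ell_{1}$ has $AK$, its unit vectors $\{e^{(k)}\}$ form a Schauder basis and every $y\in\ell_{1}$ has the unique norm-convergent expansion $y=\sum_{k}y_{k}e^{(k)}$. Put $s^{(k)}:=T^{-1}e^{(k)}=\Gamma^{-1}e^{(k)}$. For $x\in bv(G)$ set $y:=\Gamma x$; applying the continuous linear map $T^{-1}$ term by term gives $x=T^{-1}y=\sum_{k}y_{k}\,T^{-1}e^{(k)}=\sum_{k}(\Gamma x)_{k}\,s^{(k)}$, the series converging in the norm of $bv(G)$. Uniqueness is immediate: if $x=\sum_{k}\mu_{k}s^{(k)}$ then applying the continuous map $T$ yields $\Gamma x=\sum_{k}\mu_{k}e^{(k)}$, so $\mu_{k}=(\Gamma x)_{k}$ by uniqueness of the $\ell_{1}$-expansion. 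This establishes that $\{s^{(k)}\}$ is a basis with the asserted coordinate functionals $x\mapsto(\Gamma x)_{k}$.

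It then remains only to identify the entries of $s^{(k)}=\Gamma^{-1}e^{(k)}$ with the displayed piecewise formula. Here I would use the factorization $\Gamma^{-1}=G(u,v)^{-1}\Delta^{-1}$, so that $s^{(k)}=G(u,v)^{-1}\bigl(\Delta^{-1}e^{(k)}\bigr)=G(u,v)^{-1}b^{(k)}$, where $\Delta^{-1}$ is the summation matrix and $b^{(k)}$ is the sequence that is $0$ for $n<k$ and $1$ for $n\ge k$. Since $G(u,v)^{-1}$ is lower bidiagonal with diagonal $1/(u_{n}v_{n})$ and first subdiagonal $-1/(u_{n-1}v_{n})$, the product collapses by telescoping to $s_{n}^{(k)}=0$ for $n<k$, $s_{k}^{(k)}=1/(u_{k}v_{k})$, and $s_{n}^{(k)}=\frac{1}{v_{n}}\bigl(\frac{1}{u_{n}}-\frac{1}{u_{n-1}}\bigr)$ for $n>k$.

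The functional-analytic part above is routine once the isometry $T$ is in hand; the main care-point is this final computation. One must first establish that the factorable matrix $G(u,v)$ has the bidiagonal inverse claimed, then carry the telescoping through cleanly, and finally reconcile the index placement and the boundary cases ($n=k$, and $k=0,1$ where $u_{k-1}$ must be read with care) with the piecewise expression in the statement. Verifying $\Gamma s^{(k)}=e^{(k)}$ directly from the definition of $\gamma_{nk}$ is an equivalent route to the same check and is the place where the indexing must be handled most carefully.
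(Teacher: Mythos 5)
Your functional-analytic skeleton is the standard one, and it is in fact the only ``proof'' the paper offers: the paper omits the argument entirely, saying only that the theorem ``can be proved as Theorem 2.8'' of the cited Altay--Ba\c{s}ar papers, and that argument is exactly your transport of the $AK$-basis $\{e^{(k)}\}$ of $\ell_{1}$ through the triangle $\Gamma=\Delta\,G(u,v)$. The isometry, the norm-convergent expansion $x=\sum_{k}(\Gamma x)_{k}\Gamma^{-1}e^{(k)}$, and the uniqueness argument are all fine, and your identification of $G(u,v)^{-1}$ as the bidiagonal matrix with diagonal $1/(u_{n}v_{n})$ and subdiagonal $-1/(u_{n-1}v_{n})$ is correct.

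The step you defer to the end, however, cannot be completed as you describe, and this is worth naming. Your telescoping correctly yields $s_{n}^{(k)}=(\Gamma^{-1}e^{(k)})_{n}=\frac{1}{v_{n}}\bigl(\frac{1}{u_{n}}-\frac{1}{u_{n-1}}\bigr)$ for $n>k$, whereas the theorem displays $\frac{1}{v_{n}}\bigl(\frac{1}{u_{k}}-\frac{1}{u_{k-1}}\bigr)$: the reciprocal difference is taken at the column index $k$ rather than the row index $n$, and the two do not coincide for general $u$. This is not the index bookkeeping you flag as a care-point; the displayed sequence simply fails $\Gamma s^{(k)}=e^{(k)}$. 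For instance, at $m=k+1$ the displayed formula gives $(\Gamma s^{(k)})_{k+1}=(u_{k+1}-u_{k})\frac{1}{u_{k}}+u_{k+1}\bigl(\frac{1}{u_{k}}-\frac{1}{u_{k-1}}\bigr)$, which is nonzero in general, while your formula telescopes to $0$ there. (The same slip propagates to the Riesz specialization later in the paper, and the statement also leaves $s_{n}^{(k)}$ undefined for $k\in\{0,1\}$, $n>k$, since the first case reads $1<k<n$.) So your argument proves the theorem for the corrected sequence $s^{(k)}=\Gamma^{-1}e^{(k)}$ --- as any correct proof must, since the transported basis is unique --- and to close it you should replace the deferred ``reconciliation'' with either an explicit statement of this correction or the direct verification $\Gamma s^{(k)}=e^{(k)}$ for your computed $s^{(k)}$, which telescopes exactly as in your computation of $G(u,v)^{-1}b^{(k)}$.
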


\begin{thm}\label{alphaweighted}
We define the matrix $H=(h_{nk})$ as
\begin{eqnarray}\label{uvmtrx}
h_{nk}= \left\{ \begin{array}{ccl}
\frac{1}{v_{k}}\left(\frac{1}{u_{k}}-\frac{1}{u_{k-1}}\right)a_{n}&, & \quad (1\leq k < n)\\
\frac{a_{n}}{u_{n}v_{n}}&, & \quad (k=n)\\
0&, & \quad (k>n)
\end{array}\right.
\end{eqnarray}
for all $k,n\in \mathbb{N}$, where $u,v\in \mathcal{U}, a=(a_{k})\in \omega$. The $\alpha-$dual of the space $bv(G)$ is the set
\begin{eqnarray*}
d_{5}=\left\{a=(a_{k})\in \omega: \sup_{N\in \mathcal{F}}\sum_{k}\left|h_{nk}\right|<\infty \right\}.\\
\end{eqnarray*}
\end{thm}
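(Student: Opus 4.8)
The plan is to follow exactly the route used above for $[bv(C)]^{\alpha}$, with the Ces\`{a}ro inversion replaced by the inversion of the weighted-mean triangle $\Gamma$. Since $\Gamma$ is a triangle, the assignment $x\mapsto y=\Gamma x$ is a linear isomorphism of $bv(G)$ onto $\ell_{1}$; in particular $x\in bv(G)$ if and only if $y=(y_{k})\in\ell_{1}$, where $y_{k}$ is given by $(\ref{seqtrns1})$. The first step is to make the inverse of this map explicit. By Theorem \ref{basisweighted} every $x\in bv(G)$ has the representation $x=\sum_{k}(\Gamma x)_{k}s^{(k)}$, so coordinatewise $x_{n}=\sum_{k}y_{k}s_{n}^{(k)}$ for each $n\in\mathbb{N}$, the coefficients $s_{n}^{(k)}$ being the entries of $\Gamma^{-1}$.

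With the inverse in hand, the second step transcribes the $\alpha$-dual condition into a matrix class. For $a=(a_{n})\in\omega$ I would compute, for each $n$,
\[
a_{n}x_{n}=a_{n}\sum_{k}s_{n}^{(k)}y_{k}=\sum_{k}h_{nk}y_{k}=(Hy)_{n},
\]
where $h_{nk}=a_{n}s_{n}^{(k)}$ is precisely the matrix $H$ introduced in $(\ref{uvmtrx})$. The decisive observation is that this identity realises the sequence $(a_{n}x_{n})$ as the $H$-transform of $y$; consequently $ax=(a_{n}x_{n})\in\ell_{1}$ for every $x\in bv(G)$ is equivalent, through the isomorphism $bv(G)\cong\ell_{1}$, to $Hy\in\ell_{1}$ for every $y\in\ell_{1}$. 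Thus $a\in[bv(G)]^{\alpha}$ if and only if $H\in(\ell_{1}:\ell_{1})$.

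The final step is to invoke Lemma \ref{duallem1}(iii) with $H$ in place of $A$: the membership $H\in(\ell_{1}:\ell_{1})$ holds if and only if $\sup_{k}\sum_{n}|h_{nk}|<\infty$, which is exactly the condition defining $d_{5}$. This yields $[bv(G)]^{\alpha}=d_{5}$ and completes the argument.

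I expect the only genuine obstacle to be the explicit inversion underlying $h_{nk}=a_{n}s_{n}^{(k)}$, i.e.\ the verification that the basis coefficients of Theorem \ref{basisweighted} really do invert the weighted-mean difference. The cleanest way to see this is the telescoping identity $y_{k}=u_{k}S_{k}-u_{k-1}S_{k-1}$ with $S_{k}=\sum_{j=0}^{k}v_{j}x_{j}$, which gives $u_{k}S_{k}=\sum_{j=0}^{k}y_{j}$ at once and hence the closed form for $x_{n}$ in terms of $y$. Everything after this is the same bookkeeping as in the $bv(C)$ case, together with the routine remark that $\ell_{1}$-membership of $(a_{n}x_{n})$ and of $(Hy)_{n}$ coincide because $x\leftrightarrow y$ is a bijection.
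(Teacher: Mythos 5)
Your proposal is correct and follows essentially the route the paper intends: the paper gives no explicit proof of this theorem (it defers to the analogous argument for $[bv(C)]^{\alpha}$ and to external references), and that argument is exactly the reduction $a_{n}x_{n}=(Hy)_{n}$ via the inverse of the triangle $\Gamma$ followed by Lemma \ref{duallem1}(iii), which is what you carry out. One caveat: your (correct) telescoping inversion gives $s_{n}^{(k)}=\frac{1}{v_{n}}\bigl(\frac{1}{u_{n}}-\frac{1}{u_{n-1}}\bigr)$ for $k<n$, so $a_{n}s_{n}^{(k)}$ does not literally coincide with the printed $h_{nk}=\frac{1}{v_{k}}\bigl(\frac{1}{u_{k}}-\frac{1}{u_{k-1}}\bigr)a_{n}$; that discrepancy (like the stray $\sup_{N\in\mathcal{F}}$ in the definition of $d_{5}$) traces to index typos in the paper's statements of $s_{n}^{(k)}$ and $H$, not to a flaw in your method, but you should not assert the identification ``is precisely the matrix $H$'' without reconciling the indices.
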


Using the Theorem \ref{mtrxtool} and the matrix (\ref{uvmtrx}), we can give $\beta-$ and $\gamma-$duals of the space $bv(G)$ as below:\\
\begin{cor}
Let $u,v\in U$ for all $k\in \mathbb{N}$. Then,
\begin{eqnarray*}
\{bv(G)\}^{\beta}=\left\{a=(a_{k})\in \omega:\left\{\frac{1}{v_{k}}\left(\frac{1}{u_{k}}-\frac{1}{u_{k-1}}\right)a_{k}\right\}\in \ell_{1} \quad \textrm{and} \quad \left(\frac{a_{n}}{u_{n}v_{n}}\right)\in c\right\}
\end{eqnarray*}
and
\begin{eqnarray*}
\{bv(G)\}^{\gamma}=\left\{z=(z_{k})\in \omega: \left\{\frac{1}{v_{k}}\left(\frac{1}{u_{k}}-\frac{1}{u_{k-1}}\right)a_{k}\right\}\in \ell_{1} \quad \textrm{and} \quad \left(\frac{a_{n}}{u_{n}v_{n}}\right)\in \ell_{\infty}\right\}.\\
\end{eqnarray*}
\end{cor}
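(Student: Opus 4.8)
The plan is to read this off from Theorem~\ref{mtrxtool}, which is tailor-made for computing $\beta$- and $\gamma$-duals of a matrix domain. Since by construction $bv(G)=(\ell_{1})_{\Gamma}$ with $\Gamma=\Delta\cdot G(u,v)$ a triangle, I would apply Theorem~\ref{mtrxtool} with $\lambda=\ell_{1}$ and $U=\Gamma$. This requires the entries $v_{jk}=(\Gamma^{-1})_{jk}$ of the inverse triangle, which are already available: they are exactly the columns $s^{(k)}$ exhibited in Theorem~\ref{basisweighted} (equivalently, one factors $\Gamma=\Delta\cdot G(u,v)$ and inverts the two factors), and they are the coefficients assembled in the matrix $H$ of (\ref{uvmtrx}). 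With these in hand, Theorem~\ref{mtrxtool} gives at once
\begin{eqnarray*}
\{bv(G)\}^{\beta}=\{a\in\omega: B\in(\ell_{1}:c)\},\qquad \{bv(G)\}^{\gamma}=\{a\in\omega: B\in(\ell_{1}:\ell_{\infty})\},
\end{eqnarray*}
where $B=(b_{nk})$ has entries $b_{nk}=\sum_{j=k}^{n}a_{j}v_{jk}$.

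Next I would make the structure of $B$ explicit by separating its diagonal from its off-diagonal part, writing $b_{nk}=\frac{a_{k}}{u_{k}v_{k}}+\sum_{j=k+1}^{n}\frac{1}{v_{j}}\left(\frac{1}{u_{j}}-\frac{1}{u_{j-1}}\right)a_{j}$, so that $B$ decomposes into a diagonal term $\frac{a_{k}}{u_{k}v_{k}}$ and a tail sum of the sequence $\frac{1}{v_{k}}\left(\frac{1}{u_{k}}-\frac{1}{u_{k-1}}\right)a_{k}$. I would then invoke the matrix-class characterizations of Lemma~\ref{duallem1}: part~(ii) for the $\beta$-case and part~(i) for the $\gamma$-case. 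Part~(i) demands only $\sup_{n,k}|b_{nk}|<\infty$, which is controlled by requiring the off-diagonal coefficients $\left\{\frac{1}{v_{k}}\left(\frac{1}{u_{k}}-\frac{1}{u_{k-1}}\right)a_{k}\right\}$ to lie in $\ell_{1}$ (forcing the tail sums to be uniformly bounded) together with boundedness of the diagonal $\left(\frac{a_{n}}{u_{n}v_{n}}\right)$; part~(ii) adds the existence of $\lim_{n}b_{nk}$ for each $k$, which upgrades the diagonal requirement from $\ell_{\infty}$ to $c$. Reading off the two resulting pairs of conditions yields precisely the stated descriptions of $\{bv(G)\}^{\beta}$ and $\{bv(G)\}^{\gamma}$.

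The hard part will be the reduction in the second step: one must verify carefully that the abstract conditions $\sup_{n,k}|b_{nk}|<\infty$ and $\lim_{n}b_{nk}\ \text{exists}$ disentangle cleanly into one condition on the diagonal sequence $\left(\frac{a_{n}}{u_{n}v_{n}}\right)$ and one on the weighted-difference sequence $\left\{\frac{1}{v_{k}}\left(\frac{1}{u_{k}}-\frac{1}{u_{k-1}}\right)a_{k}\right\}$, with no hidden cross-constraints arising from the interaction of the diagonal and tail-sum pieces. This is an Abel-type rearrangement, and the delicate point is checking that the $\ell_{1}$ control of the off-diagonal coefficients is exactly what makes the partial sums $\sum_{j=k+1}^{n}(\cdot)$ uniformly bounded and convergent while leaving the diagonal as the only remaining degree of freedom. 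Everything else is routine, since the isomorphism $bv(G)\cong\ell_{1}$ through the $\Gamma$-transform (\ref{seqtrns1}) lets the duality computations be transported verbatim between the two spaces.
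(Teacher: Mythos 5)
Your overall route coincides with the paper's: the paper "proves" this corollary in a single line by invoking Theorem~\ref{mtrxtool} with $U=\Gamma$ together with the matrix (\ref{uvmtrx}) and then appealing to Lemma~\ref{duallem1}. Your reduction to ``$B\in(\ell_{1}:c)$ resp.\ $B\in(\ell_{1}:\ell_{\infty})$ with $b_{nk}=\frac{a_{k}}{u_{k}v_{k}}+\sum_{j=k+1}^{n}\frac{1}{v_{j}}\bigl(\frac{1}{u_{j}}-\frac{1}{u_{j-1}}\bigr)a_{j}$'' is correct and is exactly what the paper leaves implicit (your computation of $\Gamma^{-1}$ is in fact the right one; note it differs from the formula for $s_{n}^{(k)}$ printed in Theorem~\ref{basisweighted}, which has $u_{k}$ where $u_{n}$ should stand).

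The gap is in the last step, precisely the one you defer as ``the hard part'': the two conditions of Lemma~\ref{duallem1} do \emph{not} disentangle into the displayed pair of conditions. Write $c_{j}=\frac{1}{v_{j}}\bigl(\frac{1}{u_{j}}-\frac{1}{u_{j-1}}\bigr)a_{j}$ and $d_{k}=\frac{a_{k}}{u_{k}v_{k}}$. Then $\sup_{n,k}|b_{nk}|<\infty$ is equivalent to $(d_{k})\in\ell_{\infty}$ (take $k=n$) together with uniform boundedness of the partial sums $\sum_{j=k+1}^{n}c_{j}$, i.e.\ $(c_{j})\in bs$; and the existence of $\lim_{n}b_{nk}$ for each fixed $k$ is equivalent to convergence of the series $\sum_{j}c_{j}$, i.e.\ $(c_{j})\in cs$ --- it imposes nothing on the behaviour of $d_{k}$ as $k\to\infty$, because $d_{k}$ enters $b_{nk}$ only as a constant offset once $k$ is fixed. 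So what actually falls out of the argument is $(c_{j})\in cs$ and $(d_{k})\in\ell_{\infty}$ for the $\beta$-dual, resp.\ $(c_{j})\in bs$ and $(d_{k})\in\ell_{\infty}$ for the $\gamma$-dual. The conditions you (and the corollary) assert, namely $(c_{j})\in\ell_{1}$ and $(d_{k})\in c$, are sufficient but not necessary: $\ell_{1}\subsetneq cs$, and nothing in the hypotheses forces $\lim_{k}d_{k}$ to exist. A degenerate sanity check: for $u=v=e$ one gets $\Gamma=I$, hence $bv(G)=\ell_{1}$ and $\{bv(G)\}^{\beta}=\ell_{\infty}$, whereas the displayed set reduces to $c$. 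The equivalence you would need in the ``reading off'' step therefore cannot be established as stated; either the conclusion must be corrected to the $cs$/$bs$ and $\ell_{\infty}$ conditions, or extra hypotheses on $u,v$ beyond $u,v\in\mathcal{U}$ must be imposed.
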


Following theorems can be proved as Theorem 4.2 and 4.3 in \cite{kirisci}.\\

\begin{thm}
Suppose that the entries of the infinite matrices $A=(a_{nk})$ and $B=(b_{nk})$ are connected with the relation
\begin{eqnarray*}
a_{nk}=\sum_{j=k}^{\infty} (u_{j}-u_{j-1})v_{k}b_{nj} \quad \quad ~\textrm{or }~ b_{nk}=\frac{1}{v_{k}}\left(\dfrac{1}{u_{k}}-\dfrac{1}{u_{k-1}}\right)a_{nk}
\end{eqnarray*}
for all $k,n\in\mathbb{N}$ and $Y$ be any given sequence space. Then $A\in (bv(G):Y)$ if and only if $\{a_{nk}\}_{k\in\mathbb{N}}\in \{bv(G)\}^{\beta}$ for all $n\in\mathbb{N}$ and $B\in (\ell_{1}:Y)$.\\
\end{thm}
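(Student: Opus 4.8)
The plan is to follow the template used for the earlier matrix-class theorem on $bv(C)$ and for Theorem 4.2 in \cite{kirisci}, exploiting the fact that the $\Gamma$-transform $y=\Gamma x$ given by (\ref{seqtrns1}) is a norm isomorphism of $bv(G)$ onto $\ell_{1}$. The entire argument rests on rewriting the $A$-transform of a sequence $x\in bv(G)$ as the $B$-transform of its image $y=\Gamma x\in\ell_{1}$. Concretely, I would first record the finite-sum identity obtained by substituting (\ref{seqtrns1}) into $\sum_{k=0}^{m}a_{nk}x_{k}$ and rearranging the order of summation; this produces a partial sum of $\sum_{k}b_{nk}y_{k}$ together with a boundary remainder term, and the stated relation between $a_{nk}$ and $b_{nk}$ is exactly what identifies the resulting coefficients as the entries of $B$.

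For the forward implication I would assume $A\in(bv(G):Y)$ and take an arbitrary $y=(y_{k})\in\ell_{1}$, letting $x\in bv(G)$ be its preimage under the isomorphism. Since $Ax$ is defined for every $x\in bv(G)$, each row $\{a_{nk}\}_{k\in\mathbb{N}}$ must lie in $\{bv(G)\}^{\beta}$, which furnishes the first necessary condition. Passing to the limit $m\to\infty$ in the finite-sum identity then yields $(By)_{n}=(Ax)_{n}$ for all $n$, so that $By=Ax\in Y$; as $y$ was arbitrary in $\ell_{1}$, this shows $B\in(\ell_{1}:Y)$.

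For the converse I would assume both conditions and take any $x\in bv(G)$ with $y=\Gamma x\in\ell_{1}$. Membership of each row in $\{bv(G)\}^{\beta}$ guarantees that $Ax$ exists, and the same finite-sum identity, now read from the other side and taken to the limit, gives $Ax=By$. Since $B\in(\ell_{1}:Y)$ we have $By\in Y$, hence $Ax\in Y$, which proves $A\in(bv(G):Y)$ and completes both directions.

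The main obstacle is the limiting step in the rearrangement identity: one must justify that the boundary remainder arising from the change of summation order vanishes as $m\to\infty$ and that the interchange itself is legitimate. This is precisely where the $\beta$-dual hypothesis is indispensable, since the condition $\{a_{nk}\}_{k\in\mathbb{N}}\in\{bv(G)\}^{\beta}$ is exactly the convergence requirement needed to control the tail. A clean way to carry it out is via Abel summation together with the explicit characterization of $\{bv(G)\}^{\beta}$ recorded in the preceding corollary; once that single analytic point is secured, both inclusions close routinely.
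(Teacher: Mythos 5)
Your proposal is correct and follows essentially the same route the paper intends: the paper omits a written proof for this theorem, referring to Theorem 4.2 of \cite{kirisci} and to the identical template it carries out in full for the $bv(C)$ case, namely using the norm isomorphism $y=\Gamma x$ between $bv(G)$ and $\ell_{1}$, the Abel-summation identity turning $\sum_{k}a_{nk}x_{k}$ into $\sum_{k}b_{nk}y_{k}$, and the $\beta$-dual hypothesis to justify the passage to the limit. Your explicit attention to the vanishing of the boundary term is, if anything, more careful than the paper's own treatment.
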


\begin{thm}
Suppose that the entries of the infinite matrices $A=(a_{nk})$ and $T=(t_{nk})$ are connected with the relation
\begin{eqnarray*}
t_{nk}=\sum_{j=0}^{n} \left(u_n-u_{n-1}\right)v_ja_{jk}
\end{eqnarray*}
for all $k,n\in\mathbb{N}$ and $Y$ be any given sequence space. Then, $A\in (Y:bv(G))$ if and only if $T\in (Y:\ell_{1})$.\\
\end{thm}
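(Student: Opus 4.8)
The plan is to reduce the statement to the Ba\c{s}ar--Altay lemma quoted above (\cite[Lemma 5.3]{AB2}), applied with the triangle $U=\Gamma=\Delta\,G(u,v)$ and target space $\ell_{1}$. Recall that $bv(G)=(bv)_{G(u,v)}=(\ell_{1})_{\Gamma}$, so that by definition $Az\in bv(G)$ means exactly $\Gamma(Az)\in\ell_{1}$. The cited lemma asserts that $A\in(Y:(\ell_{1})_{\Gamma})$ if and only if $\Gamma A\in(Y:\ell_{1})$, and $(\ell_{1})_{\Gamma}=bv(G)$; hence the whole statement collapses to verifying that the matrix product $\Gamma A$ is precisely the matrix $T$ given by the connecting relation.

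That verification is a short finite computation. Since $\Gamma$ is a triangle with $\gamma_{nj}=(u_{n}-u_{n-1})v_{j}$ for $j<n$, $\gamma_{nn}=u_{n}v_{n}$ and $\gamma_{nj}=0$ for $j>n$, each row of $\Gamma$ is finitely supported, and
\begin{eqnarray*}
(\Gamma A)_{nk}=\sum_{j=0}^{n}\gamma_{nj}a_{jk}=\sum_{j=0}^{n-1}(u_{n}-u_{n-1})v_{j}a_{jk}+u_{n}v_{n}a_{nk},
\end{eqnarray*}
which is the entry $t_{nk}$ linking $A$ and $T$. The finite support of the rows means there is no convergence question in forming the product, so $T=\Gamma A$ is immediate and the lemma finishes the argument.

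Alternatively, following the pattern of the preceding theorems, I would argue by hand. Fix $z=(z_{k})\in Y$ and $n\in\mathbb{N}$, expand the partial sum $\sum_{k=0}^{m}t_{nk}z_{k}$ using $t_{nk}=\sum_{j=0}^{n}\gamma_{nj}a_{jk}$, and interchange the finite $j$-summation with the $k$-summation. Passing to the limit $m\to\infty$ gives $(Tz)_{n}=\sum_{j=0}^{n}\gamma_{nj}(Az)_{j}=\{\Gamma(Az)\}_{n}$ for every $n$, i.e.\ $Tz=\Gamma(Az)$ as sequences. Consequently $Az\in bv(G)$, which is $\Gamma(Az)\in\ell_{1}$, holds for all $z\in Y$ if and only if $Tz\in\ell_{1}$ for all such $z$; this is the equivalence $A\in(Y:bv(G))\iff T\in(Y:\ell_{1})$.

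I do not anticipate a genuine difficulty; the only point deserving care is the well-definedness of the intermediate sequences. Before one may form $\Gamma(Az)$ each coordinate $(Az)_{j}=\sum_{k}a_{jk}z_{k}$ must converge: in the direct direction this is handed to us by $Az\in bv(G)$, while in the converse direction it is recovered from $Tz\in\ell_{1}$ together with the fact that $\Gamma$ is an invertible triangle (running up the rows $n=0,1,2,\dots$ and using $\gamma_{nn}=u_{n}v_{n}\neq 0$). Once existence is secured, the finiteness of the $j$-range renders the interchange of sums and the limit $m\to\infty$ routine, and the triangle structure guarantees that nothing is lost in passing between $Az$ and $Tz=\Gamma(Az)$.
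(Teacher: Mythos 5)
Your proposal is correct and takes essentially the same route as the paper: for this theorem the paper merely cites Theorem 4.3 of \cite{kirisci}, but its explicit proof of the $bv(C)$ analogue is exactly your identity $(Tz)_{n}=\{\Gamma(Az)\}_{n}$ followed by the observation that $Az\in(\ell_{1})_{\Gamma}$ iff $Tz\in\ell_{1}$, and your packaging of this as an instance of \cite[Lemma 5.3]{AB2} (quoted in the paper) is the same argument; your remarks on the existence of $Az$ in the converse direction via invertibility of the triangle $\Gamma$ supply care the paper omits. One caveat: $(\Gamma A)_{nk}=\sum_{j=0}^{n-1}(u_{n}-u_{n-1})v_{j}a_{jk}+u_{n}v_{n}a_{nk}$, whereas the relation as printed gives the $j=n$ term as $(u_{n}-u_{n-1})v_{n}a_{nk}$, so your assertion that this ``is the entry $t_{nk}$'' is not literally true; the displayed formula is almost certainly a misprint for $T=\Gamma A$ (compare the $bv(R)$ analogue, whose summation index is garbled), but you should say explicitly that you are correcting the diagonal term rather than silently identifying the two.
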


\subsection{The space $bv(R)$}
If we choose $u_{n}=1/Q_{n}, v_{k}=q_{k}$ in the space $bv(G)$ defined by (\ref{trimtrx1}), we obtain the space $bv(R)$:
\begin{eqnarray}\label{trimtrx2}
bv(R)=\left\{x=(x_{k})\in \omega: \sum_{k}\left|\sum_{j=0}^{k}\left(q_{j}/Q_{k}\right)x_{j}-\sum_{j=0}^{k-1}\left(q_{j}/Q_{k-1}\right)x_{j}\right|<\infty\right\}.
\end{eqnarray}
we can denotes the space $bv(R)$ as $bv(R)=(bv)_{R^{t}}=(\ell_{1})_{\Delta.R^{t}}$ as the spaces $bv(G)$, where $\Sigma=\sigma_{nk}=\Delta.R^{t}$ defined by $\sigma_{nk}=(1/Q_{n}-1/Q_{n-1})q_{k} \quad (1\leq k<n)$; $\sigma_{nk}=q_{n}/Q_{n} \quad (k=n)$ and $\sigma_{nk}=0 \quad (k>n)$ for all $n$.\\

The $\Sigma-$transform of the sequence $x=(x_{k})$ defined by
\begin{eqnarray}\label{seqtrns2}
y_{k}=\sum_{j=0}^{k-1}\left(\frac{1}{Q_{k}}-\frac{1}{Q_{k-1}}\right)q_{j}x_{j}+\frac{q_{k}}{Q_{k}}x_{k}.
\end{eqnarray}

Following theorems can be proved by Theorem 2.9, Theorem 2.7 in \cite{AB3}, respectively.

\begin{thm}\label{basisRiesz}
Define a sequence $p^{(k)}=\{p_{n}^{(k)}\}_{n\in\mathbb{N}}$ of elements of the space $bv(R)$ for every fixed $k\in \mathbb{N}$ by
\begin{eqnarray*}
p_{n}^{(k)}= \left\{ \begin{array}{ccl}
\frac{\left(Q_{k}-Q_{k-1}\right)}{q_{n}}&, & \quad (1\leq k <n)\\
\frac{Q_{n}}{q_{n}}&, & \quad (n=k)\\
0&, & \quad (k>n)
\end{array} \right.
\end{eqnarray*}
Therefore, the sequence $\{p^{(k)}\}_{k\in\mathbb{N}}$ is a basis for the space $bv(R)$ and any $x\in bv(R)$ has a unique representation of the form
\begin{eqnarray*}
x=\sum_{k}(\Sigma x)_{k}p^{(k)}.\\
\end{eqnarray*}
\end{thm}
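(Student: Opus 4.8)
The plan is to use that $\Sigma=\Delta\cdot R^{t}$ is a triangle matrix, so that $bv(R)=(\ell_{1})_{\Sigma}$ is a $BK$-space and the $\Sigma$-transform $x\mapsto y=\Sigma x$ is an isometric isomorphism of $bv(R)$ onto $\ell_{1}$ with $\|x\|_{bv(R)}=\|\Sigma x\|_{\ell_{1}}$. Since $\ell_{1}$ has $AK$ with the canonical basis $\{e^{(k)}\}$, a basis of $bv(R)$ is produced by transporting this basis back through $\Sigma^{-1}$; concretely, the candidate vectors $p^{(k)}$ ought to be exactly the columns of the triangle $\Sigma^{-1}$. The theorem therefore splits into (a) identifying $p^{(k)}$ with those columns, and (b) carrying the $\ell_{1}$-expansion $y=\sum_{k}y_{k}e^{(k)}$ across the isomorphism. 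This mirrors the basis theorems already stated for $bv_{p}$, $bv(C)$ and $bv(G)$, the only space-specific input being step (a).

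Step (a) is the computational heart and the step I expect to be the main obstacle. I would verify that $\Sigma p^{(k)}=e^{(k)}$ for every fixed $k$; equivalently, I would invert the relation (\ref{seqtrns2}) explicitly and read off the columns of $\Sigma^{-1}$. Substituting the piecewise definition of $p^{(k)}$ into $(\Sigma p^{(k)})_{n}=\sum_{j}\sigma_{nj}p^{(k)}_{j}$, the support condition $p^{(k)}_{j}=0$ for $j<k$ kills every contribution when $n<k$, the single diagonal term gives $\sigma_{kk}p^{(k)}_{k}=(q_{k}/Q_{k})(Q_{k}/q_{k})=1$ at $n=k$, and for $n>k$ the diagonal, the intermediate, and the last terms must cancel to $0$. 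This last cancellation is delicate: it is precisely where the off-diagonal entries of $p^{(k)}$ have to be pinned down so that the partial sums $\sum_{j}\sigma_{nj}p^{(k)}_{j}$ collapse to the Kronecker symbol and nothing else. Once $\Sigma p^{(k)}=e^{(k)}$ is confirmed it follows at once that $p^{(k)}\in bv(R)$, since $e^{(k)}\in\ell_{1}$, and that $p^{(k)}$ is the $k$-th column of $\Sigma^{-1}$.

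Granting $\Sigma p^{(k)}=e^{(k)}$, step (b) and the convergence of the expansion are routine. For $x\in bv(R)$ set $y=\Sigma x\in\ell_{1}$ and define the partial sums $x^{[m]}=\sum_{k=0}^{m}y_{k}p^{(k)}$. By linearity and step (a), $\Sigma x^{[m]}=\sum_{k=0}^{m}y_{k}e^{(k)}$, so $\Sigma(x-x^{[m]})$ has its first $m+1$ coordinates equal to $0$ and agrees with $y$ thereafter. Hence
\[
\|x-x^{[m]}\|_{bv(R)}=\|\Sigma(x-x^{[m]})\|_{\ell_{1}}=\sum_{k=m+1}^{\infty}|y_{k}|\longrightarrow 0\qquad(m\to\infty),
\]
because $y\in\ell_{1}$. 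This yields the representation $x=\sum_{k}(\Sigma x)_{k}p^{(k)}$.

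Finally I would establish uniqueness. If $x=\sum_{k}\mu_{k}p^{(k)}$ in $bv(R)$ for some scalars $\mu_{k}$, then, since matrix transformations between $BK$-spaces are continuous, $\Sigma$ is continuous on $bv(R)$ and may be applied term by term to give $\Sigma x=\sum_{k}\mu_{k}\Sigma p^{(k)}=\sum_{k}\mu_{k}e^{(k)}$ in $\ell_{1}$. The uniqueness of coefficients in the $AK$-basis of $\ell_{1}$ forces $\mu_{k}=(\Sigma x)_{k}$ for all $k$. Thus $\{p^{(k)}\}_{k\in\mathbb{N}}$ is a Schauder basis for $bv(R)$ with the asserted expansion, and the proof follows the same pattern as the earlier basis theorems of this paper.
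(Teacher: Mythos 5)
The paper offers no proof of this theorem --- it only points to Theorem 2.9 of Altay--Ba\c{s}ar \cite{AB3} --- so the only question is whether your outline would actually go through. Your strategy (transport the $AK$-basis of $\ell_{1}$ back through the triangle $\Sigma=\Delta\cdot R^{t}$, so that the basis vectors must be the columns of $\Sigma^{-1}$) is exactly the intended one, and your steps (b) and the uniqueness argument are standard and correct. But the step you yourself single out as ``the computational heart'', namely the verification $\Sigma p^{(k)}=e^{(k)}$, is only announced, never carried out --- and when one carries it out it fails for the $p^{(k)}$ as written. For $n>k$ one finds
\[
(\Sigma p^{(k)})_{n}=\Bigl(\frac{1}{Q_{n}}-\frac{1}{Q_{n-1}}\Bigr)\Bigl[Q_{k}+(n-1-k)\,q_{k}\Bigr]+\frac{q_{k}}{Q_{n}},
\]
which is not $0$ in general: with $q_{j}=2^{j}$, $k=0$, $n=1$ the value is $-1/3$. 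So the ``delicate cancellation'' you anticipate does not occur, and the $p^{(k)}$ of the statement is not the $k$-th column of $\Sigma^{-1}$ unless $(q_{n})$ is constant.

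Computing $\Sigma^{-1}=(R^{t})^{-1}\Delta^{-1}$ directly, the $k$-th column is $0$ for $n<k$, $Q_{k}/q_{k}$ for $n=k$, and $(Q_{n}-Q_{n-1})/q_{n}=1$ for $n>k$: the subscripts in the numerator of the theorem's formula should be $n$, not $k$ (the same $k\leftrightarrow n$ slip already appears in the stated basis of $bv(G)$, of which this theorem is the specialization $u_{n}=1/Q_{n}$, $v_{k}=q_{k}$). With this corrected $p^{(k)}$ the cancellation does work, since $\bigl(\tfrac{1}{Q_{n}}-\tfrac{1}{Q_{n-1}}\bigr)\bigl[Q_{k}+(Q_{n-1}-Q_{k})\bigr]+\tfrac{q_{n}}{Q_{n}}=0$, and the rest of your argument goes through verbatim. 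The lesson is that the one space-specific computation cannot be deferred to ``I would verify'': it is precisely the place where the statement, as printed, breaks down, and a complete proof must either exhibit the cancellation or discover that the formula needs repair.
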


\begin{thm}\label{alphaRiesz}
We define the matrix $M=(m_{nk})$ as
\begin{eqnarray}\label{Rieszmtrx}
m_{nk}= \left\{ \begin{array}{ccl}
\frac{\left(Q_{n}-Q_{n-1}\right)}{q_{k}}a_{n}&, & \quad (1\leq k < n)\\
\frac{Q_{n}}{q_{n}}a_{n}&, & \quad (k=n)\\
0&, & \quad (k>n)
\end{array}\right.
\end{eqnarray}
for all $k,n\in \mathbb{N}$. The $\alpha-$dual of the space $bv(R)$ is the set
\begin{eqnarray*}
d_{6}=\left\{a=(a_{k})\in \omega: \sup_{N\in \mathcal{F}}\sum_{k}\left|m_{nk}\right|<\infty\right\}.\\
\end{eqnarray*}
\end{thm}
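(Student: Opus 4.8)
The plan is to reproduce the scheme already used for $[bv(C)]^{\alpha}$, now exploiting the linear isomorphism $bv(R)\cong\ell_{1}$ carried by the $\Sigma$-transform. Fix $a=(a_{n})\in\omega$. By definition $a\in[bv(R)]^{\alpha}$ means $ax=(a_{n}x_{n})\in\ell_{1}$ for every $x\in bv(R)$, and membership $x\in bv(R)$ is equivalent to $y:=\Sigma x\in\ell_{1}$. So first I would rewrite each coordinate $a_{n}x_{n}$ as a linear image $(My)_{n}$ of $y$; then the whole question collapses to asking when $M$ maps $\ell_{1}$ into $\ell_{1}$, for which Lemma \ref{duallem1}(iii) supplies an explicit criterion.

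The key computation is the inversion of (\ref{seqtrns2}). Setting $z_{n}=\tfrac{1}{Q_{n}}\sum_{j=0}^{n}q_{j}x_{j}$, relation (\ref{seqtrns2}) says precisely that $y=\Delta z$, whence $z_{n}=\sum_{i=0}^{n}y_{i}$ and therefore $\sum_{j=0}^{n}q_{j}x_{j}=Q_{n}\sum_{i=0}^{n}y_{i}$. Differencing this identity in $n$ and using $Q_{n}-Q_{n-1}=q_{n}$ gives
\begin{eqnarray*}
q_{n}x_{n}=Q_{n}y_{n}+q_{n}\sum_{i=0}^{n-1}y_{i},\qquad\text{that is}\qquad x_{n}=\frac{Q_{n}}{q_{n}}\,y_{n}+\sum_{i=0}^{n-1}y_{i}.
\end{eqnarray*}
Multiplying by $a_{n}$ displays $a_{n}x_{n}=\sum_{k}m_{nk}y_{k}$ with $M=(m_{nk})$ the triangle built from $a$; this is where one reads off the diagonal entry $\tfrac{Q_{n}}{q_{n}}a_{n}$ and the sub-diagonal entries, which should match the specialization $u_{n}=1/Q_{n},\,v_{k}=q_{k}$ of the matrix $H$ of Theorem \ref{alphaweighted}.

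Having identified $M$, the conclusion is immediate. Since $\Sigma$ is a bijection of $bv(R)$ onto $\ell_{1}$, the sequence $ax$ lies in $\ell_{1}$ for all $x\in bv(R)$ exactly when $My\in\ell_{1}$ for all $y\in\ell_{1}$, i.e. $M\in(\ell_{1}:\ell_{1})$. Lemma \ref{duallem1}(iii), applied with $M$ in place of $A$, then turns this into the boundedness of the column sums of $M$, which is exactly the requirement defining $d_{6}$; hence $[bv(R)]^{\alpha}=d_{6}$.

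The hard part will be the inversion step together with the implicit interchange of summations: one must justify passing from the finite identity for $\sum_{j\le n}q_{j}x_{j}$ to the coordinatewise formula for $x_{n}$, and then verify that the resulting triangle $M$ agrees entry-for-entry with (\ref{Rieszmtrx}) — in particular that the sub-diagonal entries carry the correct indices on $Q$ and $q$. Once $M$ is correctly pinned down, the reduction to $(\ell_{1}:\ell_{1})$ and the appeal to Lemma \ref{duallem1}(iii) are routine, and no boundedness difficulty beyond the column-sum condition arises because the domain after transform is simply $\ell_{1}$.
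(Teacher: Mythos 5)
Your strategy is exactly the one the paper itself uses for $[bv(C)]^{\alpha}$, and in fact the paper offers no proof at all for this theorem --- it only remarks that it ``can be proved by Theorem 2.7 in \cite{AB3}'' --- so your argument supplies a proof where the paper gives only a citation. The reduction is sound: $\Sigma$ is a triangle, hence a bijection of $bv(R)$ onto $\ell_{1}$; your inversion of (\ref{seqtrns2}) via $z_{n}=\frac{1}{Q_{n}}\sum_{j=0}^{n}q_{j}x_{j}$ and $y=\Delta z$ is correct (and involves only finite sums, so the ``interchange of summations'' you worry about never actually arises); and Lemma \ref{duallem1}(iii) finishes the job.

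The step you defer --- checking that your triangle agrees entry-for-entry with (\ref{Rieszmtrx}) --- is precisely where the trouble lies, and it does not go through as stated. Your computation gives $x_{n}=\frac{Q_{n}}{q_{n}}y_{n}+\sum_{i=0}^{n-1}y_{i}$, so the subdiagonal entries of your matrix are $m_{nk}=a_{n}$ for $k<n$, equivalently $\frac{Q_{k}-Q_{k-1}}{q_{k}}a_{n}$, which is also what the specialization $u_{n}=1/Q_{n}$, $v_{k}=q_{k}$ of the matrix $H$ in Theorem \ref{alphaweighted} yields. The matrix printed in (\ref{Rieszmtrx}) instead has $\frac{Q_{n}-Q_{n-1}}{q_{k}}a_{n}=\frac{q_{n}}{q_{k}}a_{n}$, which differs from yours unless $(q_{k})$ is constant: the indices on $Q$ should be $k$, not $n$. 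Likewise, what Lemma \ref{duallem1}(iii) delivers is $\sup_{k}\sum_{n}|m_{nk}|<\infty$ (as in the set $d_{1}$ for $bv(C)$), whereas $d_{6}$ is printed as $\sup_{N\in\mathcal{F}}\sum_{k}|m_{nk}|$ with $\mathcal{F}$ undefined and the roles of $n$ and $k$ reversed. Your derivation is the correct one; to close the proof you should write out the matrix and the column-sum condition you actually obtain, and note explicitly that (\ref{Rieszmtrx}) and $d_{6}$ describe that set only after these indices are corrected.
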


Using the Theorem \ref{mtrxtool} and the matrix (\ref{Rieszmtrx}), we can give $\beta-$ and $\gamma-$duals of the space $bv(R)$ as below:\\
\begin{cor}
\begin{eqnarray*}
\{bv(R)\}^{\beta}=\left\{a=(a_{k})\in \omega:\left\{\frac{\left(Q_{n}-Q_{n-1}\right)}{q_{k}}a_{n}\right\}\in \ell_{1} \quad \textrm{and} \quad \left(\frac{Q_{n}}{q_{n}}a_{n}\right)\in c\right\}
\end{eqnarray*}
and
\begin{eqnarray*}
\{bv(R)\}^{\gamma}=\left\{z=(z_{k})\in \omega: \left\{\frac{\left(Q_{n}-Q_{n-1}\right)}{q_{k}}a_{n}\right\}\in \ell_{1} \quad \textrm{and} \quad \left(\frac{Q_{n}}{q_{n}}a_{n}\right)\in \ell_{\infty}\right\}.\\
\end{eqnarray*}
\end{cor}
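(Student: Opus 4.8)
The cleanest route exploits the fact that $bv(R)$ is nothing but the specialization of $bv(G)$ obtained by putting $u_n = 1/Q_n$ and $v_k = q_k$; since the $\beta$- and $\gamma$-duals of $bv(G)$ have already been recorded in the preceding Corollary, the plan is to substitute these values and simplify. Indeed, with $1/u_k = Q_k$ and $v_k = q_k$ one has $\frac{1}{v_k}\left(\frac{1}{u_k} - \frac{1}{u_{k-1}}\right) = \frac{Q_k - Q_{k-1}}{q_k}$ and $\frac{1}{u_n v_n} = \frac{Q_n}{q_n}$, so the two defining conditions of $\{bv(G)\}^\beta$ and $\{bv(G)\}^\gamma$ turn verbatim into the sequence conditions $\left\{\frac{Q_k - Q_{k-1}}{q_k} a_k\right\} \in \ell_1$ together with $\left(\frac{Q_n}{q_n} a_n\right) \in c$ (respectively $\in \ell_\infty$). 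This is exactly the asserted description, and nothing beyond the substitution is needed once the $bv(G)$ statement is granted.

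For a self-contained derivation matching the phrase ``Using Theorem \ref{mtrxtool} and the matrix (\ref{Rieszmtrx})'', I would instead argue directly. First note that $bv(R) = (\ell_1)_{\Sigma}$ with $\Sigma = \Delta R^t$ a triangle, so Theorem \ref{mtrxtool} applies with $\lambda = \ell_1$ and $U = \Sigma$; the inverse $\Sigma^{-1}$ needed to build the associated matrix $B^{\Sigma}$ is read off column by column from the basis $p^{(k)}$ of Theorem \ref{basisRiesz}. Theorem \ref{mtrxtool} then identifies $\{bv(R)\}^\beta$ with $\{a : B^{\Sigma} \in (\ell_1 : c)\}$ and $\{bv(R)\}^\gamma$ with $\{a : B^{\Sigma} \in (\ell_1 : \ell_\infty)\}$. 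Feeding these two matrix classes into Lemma \ref{duallem1}(ii) and (i) respectively converts the membership into the pair of scalar requirements $\sup_{n,k}|b_{nk}| < \infty$ and, for the $\beta$-case, the existence of $\lim_n b_{nk}$ for each $k$.

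The main work — and the step I expect to be the real obstacle — is then the bookkeeping that shows these $\sup$/$\lim$ conditions on $B^{\Sigma}$ decouple into the clean two-part form in the statement: the diagonal contribution $\frac{Q_n}{q_n} a_n$ controls the column limits and hence must lie in $c$ (for $\beta$) or merely be bounded, i.e.\ in $\ell_\infty$ (for $\gamma$), while the off-diagonal contribution, built from the increments $Q_k - Q_{k-1}$ and the weights $q_k$, is what forces $\left\{\frac{Q_k - Q_{k-1}}{q_k} a_k\right\} \in \ell_1$. Care is required with the index ranges ($k=0$ versus $1 \le k < n$ versus $k = n$) inherited from the piecewise definition of $p^{(k)}$, and one must verify that the boundedness and convergence conditions are genuinely \emph{equivalent} to the two separated conditions rather than merely implied by them; this equivalence is precisely where the $\ell_1$-summability of the off-diagonal sequence does the work. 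Both routes yield the same answer, so I would present the specialization argument as the proof and relegate the direct computation to a remark.
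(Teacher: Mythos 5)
Your proposal is correct in substance, and your second route is literally what the paper does: the paper offers no written proof at all for this corollary, merely the sentence ``Using the Theorem \ref{mtrxtool} and the matrix (\ref{Rieszmtrx}), we can give\dots''. Your first route --- specializing the $bv(G)$ corollary via $u_{n}=1/Q_{n}$, $v_{k}=q_{k}$ --- is a genuinely different and, I think, better presentation: since the paper \emph{defines} $bv(R)$ by exactly this substitution in (\ref{trimtrx1}), the identities $\frac{1}{v_{k}}\bigl(\frac{1}{u_{k}}-\frac{1}{u_{k-1}}\bigr)=\frac{Q_{k}-Q_{k-1}}{q_{k}}$ and $\frac{1}{u_{n}v_{n}}=\frac{Q_{n}}{q_{n}}$ transfer the $bv(G)$ corollary verbatim, with nothing left to check. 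Note that this substitution actually yields the condition $\bigl\{\frac{Q_{k}-Q_{k-1}}{q_{k}}a_{k}\bigr\}\in\ell_{1}$ with a single running index $k$, whereas the statement as printed carries the mismatched indices $\frac{(Q_{n}-Q_{n-1})}{q_{k}}a_{n}$ inherited from (\ref{Rieszmtrx}); your version is the internally consistent one. One caution on your second route: the ``decoupling'' of the $\sup$/$\lim$ conditions on $B^{\Sigma}$ into the two separated conditions is precisely the step you leave as bookkeeping, and if you actually compute $b_{nk}=\frac{Q_{k}}{q_{k}}a_{k}+(Q_{k}-Q_{k-1})\sum_{j=k+1}^{n}a_{j}/q_{j}$ from the inverse read off Theorem \ref{basisRiesz}, the resulting conditions from Lemma \ref{duallem1} do not fall out in the advertised form without further argument. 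So present the specialization argument as the proof, as you propose, and treat the direct computation as an unverified remark rather than an equivalent alternative.
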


\begin{thm}
Suppose that the entries of the infinite matrices $A=(a_{nk})$ and $B=(b_{nk})$ are connected with the relation
\begin{eqnarray*}
a_{nk}=\sum_{j=k}^{\infty}\left(\frac{1}{Q_{n}}-\frac{1}{Q_{n-1}}\right)q_{j}b_{nj}
\quad \quad ~\textrm{or}~ b_{nk}=\frac{\left(Q_{n}-Q_{n-1}\right)}{q_{k}}a_{nk}
\end{eqnarray*}
for all $k,n\in\mathbb{N}$ and $Y$ be any given sequence space. Then $A\in (bv(R):Y)$ if and only if $\{a_{nk}\}_{k\in\mathbb{N}}\in \{bv(R)\}^{\beta}$ for all $n\in\mathbb{N}$ and $B\in (\ell_{1}:Y)$.\\
\end{thm}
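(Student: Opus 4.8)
The plan is to exploit the fact, noted after (\ref{trimtrx2}), that the $\Sigma$-transform $x \mapsto y := \Sigma x$ is a norm isomorphism from $bv(R)$ onto $\ell_1$, so that any matrix map out of $bv(R)$ is governed by a companion matrix map out of $\ell_1$. Concretely, I would prove that the two mapping conditions are tied together by the single identity $Ax = By$ whenever $y = \Sigma x$, following the same template already used for $bv(C)$ and $bv(G)$ above.

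First, for the necessity direction, suppose $A \in (bv(R):Y)$. Since $Ax$ must exist for every $x \in bv(R)$, each row $\{a_{nk}\}_{k\in\mathbb{N}}$ has to satisfy that $\sum_k a_{nk} x_k$ converges for all $x \in bv(R)$; this is exactly the requirement $\{a_{nk}\}_{k\in\mathbb{N}} \in \{bv(R)\}^\beta$ for every $n$. Now take any $y = (y_k) \in \ell_1$ and recover the corresponding $x = \sum_k y_k p^{(k)} \in bv(R)$ through the basis of Theorem \ref{basisRiesz}. Starting from the partial sums $\sum_{k=0}^m a_{nk} x_k$, I would substitute the relation connecting $a_{nk}$ with $b_{nk}$, reorganize the resulting double sum, and let $m \to \infty$ to obtain $\sum_k a_{nk} x_k = \sum_k b_{nk} y_k$, that is $Ax = By$. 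Because $Ax \in Y$ for every admissible $x$, it follows that $By \in Y$ for all $y \in \ell_1$, i.e. $B \in (\ell_1:Y)$.

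For the converse, assume $\{a_{nk}\}_{k\in\mathbb{N}} \in \{bv(R)\}^\beta$ for all $n$ and $B \in (\ell_1:Y)$. Given any $x \in bv(R)$, set $y = \Sigma x$, which lies in $\ell_1$ by the very definition of $bv(R)$. The $\beta$-dual hypothesis guarantees that $Ax$ exists, and the same rearrangement yields $Ax = By$; since $B \in (\ell_1:Y)$ we get $By \in Y$, hence $Ax \in Y$. As $x$ was arbitrary, $A \in (bv(R):Y)$, which establishes the equivalence.

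The main obstacle is the central identity $Ax = By$. Proving it requires interchanging the order of summation in the double series and passing to the limit $m \to \infty$ in the partial sums, a step that is legitimate precisely because the rows of $A$ lie in $\{bv(R)\}^\beta$ (so that the relevant series converge) and because the triangular structure of $\Sigma$ and of the basis $p^{(k)}$ keeps every inner sum finite for each fixed $m$. Additional care is needed with the infinite sum $\sum_{j=k}^\infty$ appearing in the relation that defines $a_{nk}$, whose convergence must be verified before the rearrangement can be carried out; once this is secured, the remainder is the routine transfer between the isomorphic spaces $bv(R)$ and $\ell_1$.
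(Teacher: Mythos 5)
Your proposal is correct and follows essentially the same route as the paper: the paper proves the analogous theorem for $bv(C)$ (and cites the same argument for $bv(G)$ and $bv(R)$) precisely by using the norm isomorphism between the matrix domain and $\ell_{1}$, deriving the identity $Ax=By$ from the partial sums and the relation between the matrices, and transferring membership in $Y$ back and forth. The only difference is that you spell out the convergence and summation-interchange issues more explicitly than the paper does, which is a point in your favour rather than a deviation.
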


\begin{thm}
Suppose that the entries of the infinite matrices $A=(a_{nk})$ and $W=(w_{nk})$ are connected with the relation \begin{eqnarray*}
w_{nk}=\sum_{k=0}^{n}\left(\frac{1}{Q_{n}}-\frac{1}{Q_{n-1}}\right)q_{j}a_{jk}
\end{eqnarray*}
for all $k,n\in\mathbb{N}$ and $Y$ be any given sequence space. Then, $A\in (Y:bv(R))$ if and only if $W\in (Y:\ell_{1})$.\\
\end{thm}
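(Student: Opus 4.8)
The plan is to exploit the fact that $bv(R)$ is, by its very construction, the matrix domain $(\ell_{1})_{\Sigma}$, where $\Sigma=\Delta\cdot R^{t}$ is the triangle recorded above with $\sigma_{nk}=(1/Q_{n}-1/Q_{n-1})q_{k}$ for $1\leq k<n$, $\sigma_{nn}=q_{n}/Q_{n}$, and $\sigma_{nk}=0$ for $k>n$. Thus a sequence $w$ lies in $bv(R)$ precisely when $\Sigma w\in\ell_{1}$. Since the defining relation exhibits $W$ as the matrix product $\Sigma A$ (i.e.\ the $\Sigma$-transform applied columnwise to $A$, so that $w_{nk}=\sum_{j}\sigma_{nj}a_{jk}$), the statement is the specialization of the transfer principle recorded earlier in \cite[Lemma 5.3]{AB2}: with the arbitrary space $Y$ in the role of $X$, with $\ell_{1}$ in place of $Y$, and with the triangle $\Sigma$ in place of $U$, that lemma yields $A\in(Y:(\ell_{1})_{\Sigma})\iff\Sigma A\in(Y:\ell_{1})$, that is, $A\in(Y:bv(R))\iff W\in(Y:\ell_{1})$. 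So one clean route is simply to identify $\Sigma A=W$ and invoke that lemma.

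For a self-contained argument in the style of the $bv(C)$ and $bv(G)$ cases, I would instead fix an arbitrary $z=(z_{k})\in Y$ and work with the partial sums of $Wz$ directly. First I would write $\sum_{k=0}^{m}w_{nk}z_{k}$, substitute the defining relation for $w_{nk}$, and interchange the two finite sums so that the inner summation reconstitutes the $A$-transform $(Az)_{j}=\sum_{k}a_{jk}z_{k}$. Letting $m\to\infty$ then produces the key identity $(Wz)_{n}=\{\Sigma(Az)\}_{n}$ for every $n\in\mathbb{N}$: applying $W$ to $z$ is the same as first forming $Az$ and then taking its $\Sigma$-transform. From this identity the equivalence is immediate, since $Az\in bv(R)$ means exactly $\Sigma(Az)\in\ell_{1}$, which by the identity is the same as $Wz\in\ell_{1}$; as $z\in Y$ was arbitrary, $A$ maps $Y$ into $bv(R)$ if and only if $W$ maps $Y$ into $\ell_{1}$.

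The main obstacle, and the only step needing genuine care, is the interchange of summation order that produces $(Wz)_{n}=\{\Sigma(Az)\}_{n}$. Keeping everything at the level of the finite partial sums $\sum_{k=0}^{m}$ makes the inner rearrangement unproblematic, since finite sums always commute; the delicate point is the passage $m\to\infty$, which presupposes that the series defining $(Az)_{j}$ converges for each $j$ (so that $Az\in\omega$ and the outer, finite because $\Sigma$ is lower triangular, sum $\{\Sigma(Az)\}_{n}$ is meaningful). This convergence is built into the meaning of $A\in(Y:bv(R))$ for the forward direction, while on the converse one uses that $W\in(Y:\ell_{1})$ forces the requisite convergence. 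I would state this convergence hypothesis explicitly and otherwise follow the omitted-details template of the corresponding $bv(G)$ theorem, the computation being formally identical under the substitution $(u_{n},v_{k})\mapsto(1/Q_{n},q_{k})$.
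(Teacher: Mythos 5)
Your proposal is correct and follows essentially the same route the paper takes: the paper leaves this theorem without an explicit proof, but its template (carried out in full for the analogous $bv(C)$ statement) is exactly your second argument, namely fixing $z\in Y$, passing from the partial sums $\sum_{k=0}^{m}w_{nk}z_{k}$ to the identity $(Wz)_{n}=\{\Sigma(Az)\}_{n}$ as $m\to\infty$, and then reading off the equivalence from $Az\in bv(R)\iff\Sigma(Az)\in\ell_{1}$. Your observation that the whole thing is an instance of the cited transfer lemma $A\in(X:Y_{U})\iff UA\in(X:Y)$ with $U=\Sigma$ is also consistent with the paper's own toolkit, and your care about the convergence of $(Az)_{j}$ before letting $m\to\infty$ is a point the paper glosses over.
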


\section{Conclusion}
The matrix domain $X_{\Delta}$ for $X=\{\ell_{\infty}, c, c_{0}\}$ is called the \emph{difference sequence spaces},
which was firstly defined and studied by K{\i}zmaz \cite{Kizmaz}. If we choose $X=\ell_{1}$, the space $\ell_{1}({\Delta})$
is called \emph{the space of all sequences of bounded variation} and denote by $bv$. The space $bv_{p}$ consisting of all sequences
whose differences are in the space $\ell_{p}$. The space $bv_{p}$ was introduced by Başar and Altay \cite{AB2}. More recently, the sequence spaces $bv$
are studied in \cite{AB2}, \cite{BAM}, \cite{Bennet}, \cite{imamiri}, \cite{JarMal}, \cite{kirisci3}, \cite{MalRakZiv}.

Several authors studied deal with the sequence spaces which obtained with the domain of the triangle matrices. It can be seen that the matrix domains for the matrix $C_{1}$ which known as Ces\`{a}ro mean of order one in \cite{basar}, \cite{NgLee}, \cite{SengBas}; for the generalized weighted mean in \cite{AB5}, \cite{AB6}, \cite{BasKa3}, \cite{BasKa4} , \cite{kirisci2}, \cite{kirisci}, \cite{MalOz}, \cite{MalSav}, \cite{PKS}; for the Riesz mean in \cite{AB3}, \cite{AB4}, \cite{Bas1}, \cite{BasKa0}, \cite{BasKa2}, \cite{BasKay}, \cite{BasOz}, \cite{BasOz2}, \cite{KonBas}, \cite{Mal1}. Further, different works related to the matrix domain of the sequence spaces can be seen in \cite{Basarkitap}.

In this work, we give well-known results related to some properties, dual spaces and matrix transformations of the sequence space $bv$ and introduce the matrix domain of space $bv$ with arbitrary triangle matrix $A$. Afterward, we choose the matrix $A$ as Ces\`{a}ro mean of order one, generalized weighted mean and Riesz mean and compute $\alpha-, \beta-, \gamma-$duals of those spaces. And also, we characterize the matrix classes of the spaces $bv(C)$, $bv(G)$, $bv(R)$.

As a natural continuation of this paper, one can study the domain of different matrices instead of $A$. Additionally, sequence spaces in this paper can be defined by a index $p$ for $1\leq p <\infty$ and a  bounded sequence of strictly positive real numbers $(p_{k})$ for $0< p_{k} \leq 1$ and $1< p_{k} <\infty$ and the concept almost convergence. And also it may be characterized several classes of matrix transformations between new sequence spaces in this work and sequence spaces which obtained with the domain of different matrices.

\end{document}